\documentclass{ifacconf}

\usepackage{graphicx}      % include this line if your document contains figures
\usepackage{natbib}        % required for bibliography
\usepackage{subfigure}
\usepackage{color}  %给字体标红
\usepackage{amsfonts}  % \mathbb
\usepackage{amsmath}
\usepackage{amssymb}
\usepackage{tikz} %流程图（框图）
\usetikzlibrary{shapes.geometric, arrows,chains} %流程图（框图）
\usetikzlibrary{arrows.meta} %流程图（框图）
\usepackage{stfloats} % 单栏公式出现在某页的页底
\usepackage{makecell} % 表格
\usepackage{multirow}
\usepackage[inline]{enumitem} %item不换行
\usepackage{cases}
\newtheorem{Ass}{Assumption}
\newtheorem{Lem}{Lemma}
\newtheorem{Rem}{Remark}

\newtheorem{Def}{Definition}

\begin{document}
\begin{frontmatter}

\title{Optimal Parameter Design for DIGing on Minimizing Unweighted Sum of Squares\thanksref{footnoteinfo}} 
% Title, preferably not more than 10 words.

%\thanks[footnoteinfo]{Sponsor and financial support acknowledgment
%goes here. \textit{(Corresponding author: Li Chai.)}}
\thanks[footnoteinfo]{\textit{Corresponding author: Li Chai.}}
% Supported by Zhejiang Provincial Science Foundation (LZ24F030006), and Natural Science Foundation of China (U2441244, 62373323).} %Supported by National Natural Science Foundation of China under Grant U2441244 and 62373323.

\author[First]{Qiuchen Tian} 
\author[First]{Li Chai} %\corref{1}
%\authornote{aa}
\author[First]{Jinming Xu}

\address[First]{College of Control Science and Engineering,
	Zhejiang University, Hangzhou 310027, China\\
	(e-mails: $\left\{\right.$qiuchentian, chaili, jimmyxu$\left.\right\}$@zju.edu.cn)}
   
% step 3: use it where you need.
%\blfootnote{*Corresponding Author}

%\cortext[1]{Corresponding author}

%National Institute of Standards and Technology, 
%Boulder, CO 80305 USA (e-mail: author@ boulder.nist.gov).
%\address[Second]{Colorado State University, 
%   Fort Collins, CO 80523 USA (e-mail: author@lamar. colostate.edu)}
%\address[Third]{Electrical Engineering Department, 
%   Seoul National University, Seoul, Korea, (e-mail: author@snu.ac.kr)}

\begin{abstract}                % Abstract of 50--100 words
There is no general method for designing proper parameters to achieve faster convergence in distributed optimization algorithms. In this paper, we consider the distributed inexact gradient tracking (DIGing) algorithm with the objective function being the unweighted sum of squares. By representing the iteration algorithm as a dynamical linear system, we decompose it into different graph frequencies and obtain a set of decoupled subsystems, on which we can easily analyze the convergence rate. By using Routh stability criterion from control theory, we derive the explicit formula of the optimal worst-case convergence rate and the corresponding parameters. We can see that the convergence rate of DIGing is slow even for the simplest objective functions, thus acceleration is necessary for general application. 
%\textcolor{blue}{Finally, numerical examples verify the theoretical results.} 
The proposed method can be viewed as the first step toward optimal parameter design of DIGing algorithm in solving general objective functions.
%In this paper, we consider minimizing unweighted sum of squares by using the classic algorithm DIGing in decentralized optimization.
%%solving the problem of
%%using the classic decentralized optimization algorithm DIGing to solve average consensus problem, which is the simplest problem in optimization. 
%We give a new analysis approach by regarding the iteration in the algorithm as a discrete dynamic system and then provide a decomposition from the graph frequency domain.
%%provide a new approach for convergence analysis from the system perspective and use graph frequency domain decomposition. 
%In the analysis, we obtain the worst-case fastest convergence rate achievable and derive the corresponding optimal parameter design for DIGing. The experiment part verifies the optimality of the obtained parameters. % derived by us is optimal.
\end{abstract}

\begin{keyword}
	Algorithm DIGing, Worst-Case Convergence Rate, Optimal Parameter Design, Discrete Dynamic System, Routh's Stability Criterion
	
%Five to ten keywords, preferably chosen from the IFAC keyword list.
\end{keyword}

\end{frontmatter}
%===============================================================================

\section{Introduction}
Distributed optimization algorithms have drawn much attention in recent years, with extensive applications in several fields, such as distributed machine learning in \cite{nedic2017fast}, wireless networks in \cite{cohen2016distributed}, power system control in \cite{gan2012optimal}, etc. 
%Due to the need for fast computation, research on algorithm acceleration emerges.
Distributed inexact gradient tracking (DIGing) proposed by \cite{nedic2017achieving} is one of the most typical algorithms in the field of distributed optimization.
% and is widely cited.
%a classic algorithm in decentralized optimization. 
%This seminal work is widely cited in the field of distributed optimization research.
It uses gradient-tracking method in the iterations, so that the performance becomes comparable to centralized algorithms which enjoy linear convergence.
% eliminates the steady-state error in the case of using constant step size.
% in distributed optimization algorithm. 
%Thus, it enables the algorithm to converge to the optimum of the optimization problem.
%(It first eliminated the steady-state error in the case of using constant step size in distributed optimization algorithm, by using gradient tracking in iterations, which enables DIGing/the algorithm to converge to the optimum. )
%Before this,
Prior to this work, \cite{nedic2009distributed} provided the distributed subgradient descent (DGD) method. Though it is simple, it requires diminishing step sizes to attain the optimal solution, which consequently reduces the convergence rate.
% distributed optimization algorithm but it converges with a steady-state error, which could only be decayed by using decreased step size and thus slowing down the convergence rate.

Although the asymptotic convergence of DIGing has been proved, the convergence rate is usually slow in practical applications. Many efforts have been put on acceleration of distributed algorithms. The general method is to construct one type of Lyapunov functions to bound the error followed by measuring the bounds for the average descent.
%Since algorithm DIGing was proposed, subsequent efforts
%%numerous subsequent works have been devoted to/dedicated to leverage
%have focused on leveraging optimization techniques to accelerate convergence based on this foundation.
%Subsequent studies based on algorithm DIGing are quite extensive. 
%（此处补充1-2篇基于DIGing的前沿进展？）
\cite{qu2019accelerated} employed Nesterov's acceleration technique in distributed optimization algorithms.
\cite{scaman2017optimal,scaman2019optimal} developed dual-based methods to achieve optimal convergence rate.
%Some works developed dual-based methods to achieve optimal complexities, such as . Though accelerated, it is intractable in practice and sacrifices computation efficiency.
For dual-free methods, several works use inner-loops to accelerate the convergence, such as \cite{kovalev2020optimal,li2024accelerated}.
\cite{song2024optimal} provided the optimal single-loop gradient-tracking algorithm by using snapshots in the iterations.
% and Chebyshev communication acceleration. 
%Though these works 
%%above 
%developed several acceleration techniques, all of them use
%%are based on 
%the same analytical approach as in DIGing, which is constructing Lyapunov function to bound the consensus error and then measuring bounds for the average descent.
%% the descent of the average parts. 
%Though it is the main 
%%theoretical analysis 
%approach widely used in 
%%the field of 
%distributed optimization, the bounds obtained 
%%by it
%%this approach 
%are quite conservative.
%	using several techniques to bound the error of  
%	There are 丰富的后续工作基于DIGing; 选3-4个有代表性的工作。可以是learning/最优/最新等

Recently, techniques from robust control have been used to analyze the convergence rate of distributed optimization algorithms. Faster convergence rates and new insights have been reported.
%there are several
% have been several studies attempt to develop new analytical approach to attain much tight bounds on the convergence rate of optimization algorithms by using tools in control theory.
%%attempts focusing on analyzing optimization algorithms from control theory perspective. 
%\cite{lessard2022analysis} provides an analytical approach in dissipation for centralized optimization
%%non-distributed optimization/individual optimization/
%%(非分布式优化/单体优化)
%and derives the worst-case convergence rate.
%There are also 
%studies focusing on the analysis and parameter design for decentralized optimization. 
\cite{sundararajan2020analysis} provides a unified framework based on semi-definite programming (SDP) to analyze distributed optimization algorithms.
% and proposes a new algorithm SVL to converge faster.
\cite{van2022universal} presents an universal decomposition on the distributed optimization algorithms, but it fails to give the algorithm parameter design. \cite{zhang2024frequency} provides a frequency-domain framework for algorithm analysis from a robust control perspective, but its result obtained by using Nevanlinna–Pick interpolation requires high memory utilization and heavy communication burden.
%the parameter design given by them is not practical.
%/too ideal.
%	优化算法的分析方法，
However, the parameter design for distributed optimization algorithm still remains open. That is, little is known about how to find parameters for particular distributed algorithm to achieve faster convergence rate.

In this paper, we consider the problem of minimizing the simplest objective function, the unweighted sum of squares $\sum_{i=1}^N{\left( x-r_i \right) ^2}$ in a distributed manner.
%is a basic and important issue in decentralized optimization. 
We know that the optimal variable is $x^*=\left(1/N\right)\sum_{i=1}^N{r_i}$.
This problem is closely related to average consensus, which is of its own importance in multi-agent systems.
%The solution is closely related to average consensus.
%A same formulation of this problem is average consensus. 
The difference 
%between the two issues 
lies in that average consensus refers to seeking the average of initial values among agents through communications,
%by communications,
% without using the information of gradients,
while minimizing unweighted sum of squares
%in our work solving this problem 
permits the use of arbitrary initial values but the algorithm still converges to the fixed optimum.
% in the problem.
%, and the optimization algorithm, with the permission of using gradients in iteration, would converge to the fixed optimum in the problem.
%, with the permission of using gradients in iteration.
%It has been studied widely as the average consensus problem, which uses several agents to seek the average of their initial values through communications.
%Average consensus is the basic problem in distributed optimization and has been widely studied for a long time.
%Average consensus problem refers to using several agents to find the average of their initial values through communications.
Average consensus is a long-standing issue that has been studied for a long time.
\cite{xiao2004fast} gives the fast linear iterations for distributed averaging with using doubly stochastic gossip matrix.
%the analysis of the convergence  
\cite{yi2019average} presents an effective new approach to the analysis and design of consensus protocols in the graph spectrum domain. \cite{yi2023convergence} introduces a set of useful techniques to analyze the convergence rate of accelerated consensus and design the control protocols.
%first provide a new improved design of consensus protocols from the graph frequency domain by introducing a new parameter in communication, which expanded the range of available gossip matrix\textcolor{blue}{/loosened the assumptions on the gossip matrix} and accelerated the average consensus.
%Those research above are focusing on the problem of finding the average of the initial values for all agents. However, the average consensus problem can also be regarded as the simplest objective function in distributed optimization. In this perspective, algorithms designed for distributed optimization can also be used for solving average consensus problem. It is worth noting that in this way, the algorithm can use any random initial values and will still converge to the optimum for the average consensus problem.

%	Though
%Those subsequent works based on \cite{nedic2017achieving} mainly focus on using several techniques for accelerating the convergence rate of DIGing, and there are rare researches focusing on parameter design of DIGing. The parameter design of DIGing remains an open problem. 
%Consider parameter design for the classic algorithm DIGing 
In this paper, we use the unweighted sum of squares as the objective function to analyze the parameter design problem for DIGing.
%we provide a new approach for convergence analysis from the stability of discrete dynamic system perspective. 
%We derive the explicit formulation for the optimal parameters and the worst-case fastest rate achievable. % convergence rate. 
%%design for DIGing on solving the problem of minimizing unweighted sum of squares. 
%Though focusing on the simplest objective function, the analysis is not trivial.
By decomposing the iteration dynamics into different graph frequencies, we obtain a set of decoupled subsystems, from which we can easily analyze the convergence rate. By using Routh stability criterion from control theory, we derive the explicit formula of the  optimal worst-case convergence rate and the corresponding parameters. Even for the simplest objective functions, we can see that convergence rate of DIGing is quite slow and
further acceleration is necessary.
% the derivation is not trivial. 
The results can be viewed as the first step towards general methods for parameter design of distributed optimization algorithms.
%and sheds light on the analysis of DIGing for solving distributed optimization problems for general objective functions.

%Our contributions are summarized as follows. First, we propose a new approach for the convergence analysis of DIGing for solving average consensus problem. We start the analysis from the stability of discrete dynamic system perspective, and we use the graph frequency domain decomposition. Thus, we are able to transform the convergence analysis problem into a linear optimization problem subject to several quadratic inequality constraints. Second, by using the proposed analysis approach, we derive the optimal parameter design for DIGing. In the experimental part, we show that the obtained parameters are optimal.
%using the analysis approach proposed by us,

\section{Preliminary}
\subsection{Problem Formulation and Notations}
Consider the following optimization problem
%minimizing the unweighted sum of squares, which is formulated as
%commonly used average consensus problem in recent research(\cite{yi2019average,xiao2004fast}), which is formulated as
\begin{equation}
	\label{eq: problem average consensus}
	\underset{x\in \mathbb{R} ^d}{\min}\,\,\frac{1}{2}\sum_{i=1}^N{\left( x-r_i \right) ^2},
\end{equation}
where $r_i\in \mathbb{R} ^d$.
%, and $L>0$ is a constant. 
The optimum is $x^*=\frac{1}{N}\sum_{i=1}^N{r_i}$.
%	Notice that problem \eqref{eq: problem average consensus} is a special case of problem \eqref{eq: problem of quadratic}.
Let $x_i=\left[ x_{i}^{1},x_{i}^{2},\cdots ,x_{i}^{d} \right] ^T \in \mathbb{R}^d$ be a $local \,\, copy$ of the variable $x$ held by agent $i$. Denote its value at iteration $k$ as $x_i(k)$.
% We introduce the aggregate notation
Let $\boldsymbol{x}\triangleq \left[ x_{1}^{T},x_{2}^{T},\cdots ,x_{N}^{T} \right]^{T} \in \mathbb{R} ^{Nd}$ denote the augmented vector of all agents.
%, which is a column vector. 
%Denote the all-ones column vector as 
Throughout the paper, we denote $\mathbf{1}$ as the all one column vector with appropriate dimension, and
%Denote the identity matrix as
$I_N$ as the identity matrix of dimension $N$. The kronecker product is denoted as $\otimes$. 
%. Denote the kroneck product as 
%In this paper, for the sake of simplicity in notation, we denote $A^{\circ}\triangleq A\otimes I_d$ is a $M\times N$ matrix.

\subsection{Spectral Graph Theory} % Basic Knowledge of the Graph
%Consider all agents are embedded in a communication network, 
The communication network is modeled as an undirected graph $\mathcal{G} =\left( \mathcal{E} ,\mathcal{V},\mathcal{W}  \right) $ with nodes (agents) $\mathcal{V} =\left\{ v_1,v_2,\cdots ,v_N \right\} $ , edges $\mathcal{E} \subseteq \mathcal{V} \times \mathcal{V} $ and the weight matrix $\mathcal{W} \in \mathbb{R} ^{N\times N}$. %network/topology
%Self loops are allowed. 
Denote the communication matrix of the network induced by graph $\mathcal{G}$ as $W\in \mathbb{R}^{N\times N}$.
%, where $N$ is the number of nodes in the network. Denote the 
The Laplacian matrix is defined as $\mathcal{L} \triangleq D-W$, where $D=\mathrm{diag}\left\{ D_1,D_2,\cdots ,D_N \right\} $ is the degree matrix with
% the diagonal matrix with the diagonal elements being the degrees of each node.
%%whose main diagonal elements are the degrees of each node. %each/the individual nodes
%The degree of vertex $v_i$ is 
%%represented by 
$D_i=\sum_{j=1}^N{w_{ij}}$ for $i=1,\cdots,N$. 
%Denote $\bar{d}=\underset{i}{\max}\left\{ d_i \right\} $.
Throughout the article we make the following assumption.
%the following assumption is standard.

\begin{Ass}
	\label{ass graph}
	The weight matrix $W$ is a semi-definite positive matrix with nonnegative elements and the graph $\mathcal{G}$ is connected.
%	Consider the graph $\mathcal{G}$ is connected, and 
%	%		, i.e. there are no isolated points in graph $\mathcal{G}$. 
%	the matrix $W$ induced by graph $\mathcal{G}$ satisfies:
%	\begin{enumerate*}
%		\item $W=W^T$;
%		\item $W\succeq 0$.
%	\end{enumerate*}
\end{Ass}

%	With Assumption \ref{ass: xiaolin} holds, we immediately obtain the Laplacian matrix $\mathcal{L} =I-W$.

%	Note that the widely-used doubly-stochastic matrix is a special case of Assumption \ref{ass graph}.
\begin{Rem}
	\label{rem: xiaolin ass}
	In Assumption \ref{ass graph}, it only assumes the undirected graph $\mathcal{G}$ is connected with non-negative weights. It is a very weak assumption. %slack/weak/slight assumption.
	There is another assumption,
%	an another commonly used assumption, 
%	in decentralized optimization studies, 
	which is commonly used in much literature (\cite{xiao2004fast,nedic2017achieving}).
%	by \cite{xiao2004fast}. 
	%	In the following, we will give a commonly used assumption in decentralized optimization studies.
	They consider the matrix $W$ induced by graph $\mathcal{G}$ satisfying: 
	\begin{enumerate*}
		\item $W=W^T$;
		\item $W\mathbf{1}=W^T\mathbf{1}=\mathbf{1}$;
		\item $\rho \left( W-\frac{\mathbf{1}\mathbf{1}^T}{N} \right) <1$, where $\rho(\cdot)$ denotes the spectral radius of a matrix.
	\end{enumerate*}
	%	\begin{Ass}(\cite{xiao2004fast})
		%		\label{ass: xiaolin}
		%	\end{Ass}
%	Under this assumption, the Laplacian matrix is $\mathcal{L} =I-W$.
	%		Assumption \ref{ass graph} is weaker than the assumption on matrix $W$ induced by graph $\mathcal{G}$ in xiaolin2004 Th1, which is (i)$W\mathbf{1}=W^T\mathbf{1}=\mathbf{1}$;
	%		(ii)$\rho \left( W-\frac{\mathbf{1}\mathbf{1}^T}{N} \right) <1$, where $\rho(\cdot)$ denotes the spectral radius of a matrix.
	Note that this assumption
%	the assumption proposed by \cite{xiao2004fast} 
	implies that the entries of matrix $W$ satisfy $w_{ij}\in \left[ 0,1 \right] ,\, \forall i,j\in \left\{ 1,\cdots ,N \right\} $ and the graph $\mathcal{G}$ is connected.
%	It is obvious that it is more rigorous and is included in Assumption \ref{ass graph}.
	It is stronger than Assumption \ref{ass graph}.
%	a more rigorous assumption and is included in Assumption \ref{ass graph} as a special case.
	The matrix satisfying the assumption in \cite{xiao2004fast} is usually called doubly-stochastic. %\textit{}.
%	 "" and is a special case of Assumption \ref{ass graph}.
	%		However, Assumption \ref{ass graph} includes the situation of bipartite graphs while the assumption in xiaolin2004 does not. In the case of using the bipartite graph as the communication network, the algorithms AugDGM and DIGing diverge.
\end{Rem}

%\begin{Lem}[\cite{godsil2013algebraic}] %Algebraic Graph Theory,
%	\label{lemma 1}
%	For an undirected graph $\mathcal{G}$, the Laplacian matrix $\mathcal{L}$ satisfies the following properties.
%	\begin{enumerate}%[1)]
%		\item The Laplacian matrix $\mathcal{L}$ is symmetric and positive semi-definite.
%		\item All the eigenvalues of $\mathcal{L}$ are real and satisfy
%		$0=\lambda _1\leqslant \lambda _2\leqslant \cdots \leqslant \lambda _N\leqslant 2\bar{D}$, where $\bar{D}=\underset{i}{\max}\left\{ D_i \right\} $.
%		\item $\mathcal{L}$ has the following singular value decomposition:
%		$$\mathcal{L} =U\varLambda U^T,$$ where $\varLambda =\mathrm{diag} \{ \lambda _1,\lambda _2,\cdots ,\lambda _N \}  $ is a diagonal matrix, $U=\left[ u_1,u_2,\cdots ,u_N \right] \in \mathbb{R}^{N \times N} $ is an unitary matrix with column vectors $u_1,u_2,\cdots ,u_N\in \mathbb{R}^N$.
%		\item The graph $\mathcal{G}$ is $connected$ if and only if zero is a simple eigenvalue of $\mathcal{L}$, and the associated eigenvector is $\frac{1}{\sqrt{N}}\left[ 1,1,\cdots ,1 \right] ^T$.
%	\end{enumerate}
%	% connected是否需要解释？  corresponding/associated
%\end{Lem}

\begin{Lem}[\cite{godsil2013algebraic}] %Algebraic Graph Theory,
	\label{lemma 1}
	For an undirected graph $\mathcal{G}$, the Laplacian matrix $\mathcal{L}$ has the singular value decomposition $\mathcal{L} =U\varLambda U^T$, where $\varLambda =\mathrm{diag} \{ \lambda _1,\lambda _2,\cdots ,\lambda _N \}  $ is a diagonal matrix, $U=\left[ u_1,u_2,\cdots ,u_N \right] \in \mathbb{R}^{N \times N} $ is an unitary matrix with column vectors $u_1,u_2,\cdots ,u_N\in \mathbb{R}^N$.
	All the eigenvalues of $\mathcal{L}$ are real and satisfy
	$0=\lambda _1\leqslant \lambda _2\leqslant \cdots \leqslant \lambda _N\leqslant 2\bar{D}$, where $\bar{D}=\underset{i}{\max}\left\{ D_i \right\} $.
	The graph $\mathcal{G}$ is connected if and only if zero is a simple eigenvalue of $\mathcal{L}$, and the associated eigenvector is $\frac{1}{\sqrt{N}}\left[ 1,1,\cdots ,1 \right] ^T$.
%	\begin{enumerate}%[1)]
%		\item The Laplacian matrix $\mathcal{L}$ is symmetric and positive semi-definite.
%		\item 
%		\item $\mathcal{L}$ :
%		 
%		\item 
%	\end{enumerate}
%	% connected是否需要解释？  corresponding/associated
\end{Lem}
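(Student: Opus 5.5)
The plan is to establish the three assertions of Lemma~\ref{lemma 1} in turn: the orthogonal diagonalization, the location of the spectrum in $[0,2\bar{D}]$, and the connectivity characterization. Each follows from the quadratic form of $\mathcal{L}$ together with elementary matrix facts. Since the graph is undirected we have $W=W^T$. The degree matrix $D$ is diagonal, so $\mathcal{L}=D-W$ is real symmetric. By the spectral theorem there is an orthogonal (real unitary) $U=[u_1,\dots,u_N]$ with $\mathcal{L}=U\varLambda U^T$, where $\varLambda$ is real diagonal. Because $\mathcal{L}$ is also positive semidefinite (shown next), this eigendecomposition coincides with a singular value decomposition. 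We order the eigenvalues increasingly.

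For the bounds, the key identity is, for any $z\in\mathbb{R}^N$,
\begin{equation*}
z^T\mathcal{L}z=\sum_i D_i z_i^2-\sum_{i,j}w_{ij}z_iz_j=\tfrac12\sum_{i,j}w_{ij}(z_i-z_j)^2 .
\end{equation*}
This uses $D_i=\sum_j w_{ij}$ and symmetry. Since $w_{ij}\ge 0$ (Assumption~\ref{ass graph}), the form is nonnegative, so every $\lambda_k\ge0$. Moreover $\mathcal{L}\mathbf{1}=D\mathbf{1}-W\mathbf{1}=0$, hence $\lambda_1=0$ with eigenvector $\mathbf{1}/\sqrt{N}$. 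For the upper bound, apply $(z_i-z_j)^2\le 2z_i^2+2z_j^2$:
\begin{equation*}
z^T\mathcal{L}z\le \sum_{i,j}w_{ij}(z_i^2+z_j^2)=2\sum_i D_iz_i^2\le 2\bar{D}\|z\|^2 .
\end{equation*}
By the Rayleigh quotient this gives $\lambda_N\le 2\bar D$. Alternatively, Gershgorin's theorem gives the same bound: each disc is centered at $D_i-w_{ii}$ with radius $\sum_{j\ne i}w_{ij}$, so every eigenvalue is at most $2D_i-2w_{ii}\le 2\bar D$.

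For connectivity, note that $z$ lies in the kernel of the positive semidefinite $\mathcal{L}$ iff $z^T\mathcal{L}z=0$. By the identity above, this holds iff $z_i=z_j$ whenever $w_{ij}>0$, i.e.\ iff $z$ is constant on each connected component. Hence $\dim\ker\mathcal{L}$ equals the number of connected components. Zero is therefore a simple eigenvalue, with eigenspace spanned by $\frac{1}{\sqrt N}\mathbf{1}$, exactly when $\mathcal{G}$ is connected. Conversely, if the graph is disconnected, the indicator vectors of two components are linearly independent kernel elements.

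The only delicate step is this last one, the kernel--component correspondence. It requires the identity $z^T\mathcal{L}z=0 \Rightarrow \mathcal{L}z=0$, which holds for positive semidefinite matrices via the square root $\mathcal{L}^{1/2}$. It also requires propagating $z_i=z_j$ along paths. Both are standard, so I expect no real obstacle; the result is classical, as in \cite{godsil2013algebraic}, and could simply be cited.
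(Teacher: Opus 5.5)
The paper does not prove this lemma; it only cites \cite{godsil2013algebraic}. Your argument is correct and is the standard proof behind that citation. The identity $z^T\mathcal{L}z=\tfrac12\sum_{i,j}w_{ij}(z_i-z_j)^2$ (equivalently, the factorization $\mathcal{L}=BB^T$ through a weighted oriented incidence matrix $B$, the form standard references usually use) gives several things at once:
\begin{itemize}
\item positive semidefiniteness of $\mathcal{L}$;
\item $\lambda_1=0$ with eigenvector $\mathbf{1}/\sqrt{N}$;
\item the correspondence between the kernel and the connected components;
\item together with $(z_i-z_j)^2\le 2z_i^2+2z_j^2$, the bound $\lambda_N\le 2\bar{D}$.
\end{itemize}

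Writing the proof out, rather than citing it, shows exactly which hypotheses are used. You need only $W=W^T$, $w_{ij}\ge 0$, and the convention that $(i,j)$ is an edge exactly when $w_{ij}>0$. The positive semidefiniteness of $W$ in Assumption~\ref{ass graph} plays no role here.

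Your Gershgorin variant (and equally the quadratic-form bound, once the vanishing diagonal terms are dropped) gives the sharper bound $\lambda_N\le 2\max_i(D_i-w_{ii})$. This matters in the paper's setting. A positive semidefinite $W$ with some $w_{ij}>0$ forces $w_{ii}w_{jj}\ge w_{ij}^2>0$. So under Assumption~\ref{ass graph}, every node of a connected graph with $N\ge 2$ carries a self-loop. That self-loop inflates $D_i$ but cancels out of $\mathcal{L}$.
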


\section{DIGing algorithm for minimizing unweighted sum of squares}
%	\subsection{DIGing} Specialized algorithm
The original DIGing for solving unweighted sum of squares in problem \eqref{eq: problem average consensus} is written as follows
\begin{equation}
	\label{eq: original DIG entry}
	\left\{ \begin{split}
		x_i\left( k+1 \right) &=\sum_{j\in \mathcal{N} _i}{w_{ij}x_j\left( k \right)}-\alpha y_i\left( k \right)\\ 
		y_i\left( k+1 \right) &=\sum_{j\in \mathcal{N} _i}{w_{ij}y_j\left( k \right)}+x_i\left( k+1 \right) -x_i\left( k \right)
	\end{split},\right.
\end{equation}
%\begin{gather}
%	\label{eq: original DIG entry}
%	\begin{cases}
%		x_i\left( k+1 \right) =\sum_{j\in \mathcal{N} _i}{w_{ij}x_j\left( k \right)}-\alpha y_i\left( k \right)\\
%		y_i\left( k+1 \right) =\sum_{j\in \mathcal{N} _i}{w_{ij}y_j\left( k \right)}+L\left[ x_i\left( k+1 \right) -x_i\left( k \right) \right]\\
%	\end{cases},
%\end{gather}
where $W=\left[ w_{ij} \right] $ is the communication matrix. In \cite{nedic2017achieving}, 
%the weight matrix $W$ is assumed to be doubly-stochastic.
they assume matrix $W$ is doubly-stochastic. %/satisfies \textcolor{blue}{Assumption 2}
%It requires the weight matrix $W$ being doubly-stochastic/satisfying Assumption 2 in \cite{}.

In this paper, we consider a modified version of \eqref{eq: original DIG entry} by introducing an extra freedom $\varepsilon$ on the local gain $w_{ii}$ without changing the communication weights $w_{ij}, i\ne j$.
The modified DIGing algorithm is as follows
\begin{equation}
	\label{eq: entry_DIG with epsilon}
	\left\{ \begin{split}
		x_i\left( k+1 \right) &=(1-\varepsilon D_i) x_i\left( k \right)   +\varepsilon \sum_{j\in \mathcal{N} _i}{w_{ij}x_j\left( k \right)}  -\alpha y_i\left( k \right)\\ 
		y_i\left( k+1 \right) &=(1-\varepsilon D_i) y_i\left( k \right)  +\varepsilon \sum_{j\in \mathcal{N} _i}{w_{ij}y_j\left( k \right)} +\varDelta x_{i}^{k+1}  
	\end{split},\right.
\end{equation}
where $\varDelta x_{i}^{k+1}\triangleq x_i\left( k+1 \right) -x_i\left( k \right) $, and
$D_i=\sum_{j=1}^N{w_{ij}}$.
% $d_i$ is the degree of agent $i$.
For each agent $i$, the initialization of \eqref{eq: entry_DIG with epsilon} uses an arbitrary $x_i\left( 0 \right) \in \mathbb{R} ^d$ and sets $y_i\left( 0 \right) = x_i\left( 0 \right) -r_i $, for all $i=1,\cdots ,N$.

We will see that this setup leads to not only the weaker convergence condition in Assumption \ref{ass graph}, but also a faster convergence rate than the original algorithm \eqref{eq: original DIG entry}.

%The aggregated formulation of \eqref{eq: original DIG entry} is 
%\begin{equation*}
%	\begin{split}
%		\boldsymbol{x}\left( k+1 \right) &=W\otimes I_d \boldsymbol{x}\left( k \right) -\alpha \boldsymbol{y}\left( k \right),\\
%		\boldsymbol{y}\left( k+1 \right) &=W\otimes I_d \boldsymbol{y}\left( k \right) +\boldsymbol{x}\left( k+1 \right) -\boldsymbol{x}\left( k \right) .\\
%	\end{split}
%\end{equation*}
%\begin{gather}
%	\begin{cases}
%		\boldsymbol{x}\left( k+1 \right) =W^{\circ}\boldsymbol{x}\left( k \right) -\alpha \boldsymbol{y}\left( k \right)\\
%		\boldsymbol{y}\left( k+1 \right) =W^{\circ}\boldsymbol{y}\left( k \right) +LI_{Nd}\left[ \boldsymbol{x}\left( k+1 \right) -\boldsymbol{x}\left( k \right) \right]\\
%	\end{cases}.
%\end{gather}

Denote $\boldsymbol{x}\triangleq \left[ x_{1}^{T},x_{2}^{T},\cdots ,x_{N}^{T} \right]^{T}$. The iteration algorithm can be written as the following compact form
%By using the effective consensus protocol proposed in \cite{yi2019average}, we add parameter $\varepsilon$
%% to the communication weight matrix 
%and use $\left( I-\varepsilon \mathcal{L} \right) \otimes I_d $ as the communication matrix. We obtain the algorithm DIGing becomes
%\textcolor{blue}{Adding parameter} $\varepsilon$ to the communication weight matrix as the way proposed in \cite{yi2019average} and using $\left( I-\varepsilon \mathcal{L} \right) $ as the communication matrix, we obtain
%	The specialized algorithm DIGing is
\begin{equation}
	\label{eq: DIGing_with_epsilon}
	\begin{split}
		\boldsymbol{x}\left( k+1 \right) &=\left( I-\varepsilon \mathcal{L} \right)\otimes I_d \boldsymbol{x}\left( k \right) -\alpha \boldsymbol{y}\left( k \right),\\
		\boldsymbol{y}\left( k+1 \right) &=\left( I-\varepsilon \mathcal{L} \right)\otimes I_d \boldsymbol{y}\left( k \right) +\boldsymbol{x}\left( k+1 \right) -\boldsymbol{x}\left( k \right) .\\
	\end{split}
\end{equation}
Next, we define the worst-case convergence rate.

\begin{Def}
	\label{def: gamma}
	One algorithm is said to converge to the solution of problem \eqref{eq: problem average consensus} at a convergence rate $\gamma$ if 
	$$\underset{k\rightarrow \infty}{\lim}\rho ^{-k}\left\| x\left( k \right) -x^* \right\| \,\,=0,\, \forall \rho \in \left( \gamma ,1 \right].$$
\end{Def}

%\begin{Def}
We consider the worst-case convergence rate on a set of uncertain connected graphs.
Let $\left\{ \mathcal{G} \right\} _{\left[ \underline{\lambda}, \overline{\lambda} \right]}$ be the set of all connected graphs with $\left[ \lambda _2,\lambda _N \right] \subseteq \left[ \underline{\lambda},\overline{\lambda} \right] $, where $\lambda _2,\lambda _N$ are the smallest and largest positive eigenvalues of the Laplacian matrix $\mathcal{L}$. 
For the modified DIGing algorithm \eqref{eq: entry_DIG with epsilon}, the optimal parameter design problem is to find $\varepsilon $ and $\alpha $, so that the worst convergence rate is as fast as possible. This can be represented as the following optimization problem
\begin{equation*}
	\gamma ^*=\underset{\alpha ,\varepsilon}{\min}\underset{\mathcal{G} \in \left\{ \mathcal{G} \right\} _{\left[ \underline{\lambda},\overline{\lambda} \right]}}{sup}\gamma ,
%	\gamma ^{* }=\min _{\alpha, \epsilon} \sup _{G \in {G}} \gamma
\end{equation*}
where $\gamma$ is defined in Definition \ref{def: gamma}.
%The worst-case optimal convergence rate of algorithm DIGing is defined as
%	$$\gamma ^*=\underset{\mathcal{G} \in \left\{ \mathcal{G} \right\} _{\left[ \alpha ,\beta \right]}}{sup}\gamma, $$
%	where $\gamma$ is defined in Definition \ref{def: gamma}. 
In this paper, with no loss of generality, we set $\underline{\lambda} \triangleq \lambda _2, \overline{\lambda} \triangleq \lambda _N$ for notation simplicity.
%\end{Def}

%$\eta _i\triangleq 1-\varepsilon d_i$,
%\begin{equation}
%	\begin{split}
%		\left\{x_i\left( k+1 \right) =x_i\left( k \right) -\varepsilon \left[d_i x_i\left( k \right) -\sum_{j\in \mathcal{N} _i}{w_{ij}x_j\left( k \right)} \right] -\alpha y_i\left( k \right)\\
%		y_i\left( k+1 \right) =y_i\left( k \right)& -\varepsilon \left[d_i y_i\left( k \right) -\sum_{j\in \mathcal{N} _i}{w_{ij}y_j\left( k \right)} \right]\\ 
%		&+L\left[x_i\left( k+1 \right) -x_i\left( k \right)\right]
%		\right.
%	\end{split}
%\end{equation}
%\begin{gather}
%	\begin{cases}
%		x_i\left( k+1 \right) =x_i\left( k \right) -\varepsilon \left[d_i x_i\left( k \right) -\sum_{j\in \mathcal{N} _i}{w_{ij}x_j\left( k \right)} \right] -\alpha y_i\left( k \right)\\
%		y_i\left( k+1 \right) =y_i\left( k \right) -\varepsilon \left[d_i y_i\left( k \right) -\sum_{j\in \mathcal{N} _i}{w_{ij}y_j\left( k \right)} \right]\\ 
%		\,\,\,\,\,\,\,\,\,\,\,\,\,\,\,\,\,\,\,\,+L\left[x_i\left( k+1 \right) -x_i\left( k \right)\right]\\
%	\end{cases}.
%\end{gather}

%\textcolor{blue}{(Delete?)It equals to}
%\begin{gather}
%	\begin{cases}
%		x_i\left( k+1 \right) =\left( 1-\varepsilon d_i \right) x_i\left( k \right) +\varepsilon \sum_{j\in \mathcal{N} _i}{w_{ij}x_j\left( k \right)}-\alpha y_i\left( k \right)\\
%		y_i\left( k+1 \right) =\left( 1-\varepsilon d_i -L\alpha \right) y_i\left( k \right) -\varepsilon L d_i x_i\left( k \right) +\varepsilon \sum_{j\in \mathcal{N} _i}{w_{ij}\left[L x_j\left( k \right) +y_j\left( k \right) \right]}\\
%	\end{cases}.
%\end{gather}

\section{Convergence analysis and optimal parameter design}
%	用必要条件里的不等式，
\subsection{Problem Transformation and Decomposition}
%\subsection{Frequency Domain Transformation and Decomposition}
%\subsection{Problem Transformation}
%\subsection{A new approach for convergence analysis from the system perspective}
In this section, we propose a new approach for the convergence analysis of the algorithm \eqref{eq: entry_DIG with epsilon}.
%DIGing on minimizing unweighted sum of squares, and provide the optimal parameter design. 
We shall derive the explicit formula of the optimal worst-case convergence rate and the corresponding parameters.
We regard the algorithm iteration as a discrete dynamic system and give a decomposition 
%of the system 
from graph frequency domain.

%Consider algorithm DIGing formulated in \eqref{eq: DIGing_with_epsilon}. It 
The system \eqref{eq: entry_DIG with epsilon} can be written as the following discrete dynamic system 
\begin{equation}
	\label{eq: DIG system}
	\begin{split}
		\left[ \begin{array}{c}
			\boldsymbol{x}\left( k+1 \right)\\
			\boldsymbol{y}\left( k+1 \right)\\
		\end{array} \right] =&\left[ \begin{matrix}
		I-\varepsilon \mathcal{L}&		-\alpha I\\
		0&		I-\varepsilon \mathcal{L}\\
		\end{matrix} \right] \otimes I_d \left[ \begin{array}{c}
			\boldsymbol{x}\left( k \right)\\
			\boldsymbol{y}\left( k \right)\\
		\end{array} \right] \\&+\left[ \begin{array}{c}
			0\\
			-I\\
		\end{array} \right] \otimes I_d u\left( k \right),\\
		u\left( k \right) =&\left[ \begin{matrix}
			\varepsilon \mathcal{L}&		\alpha I\\
		\end{matrix} \right] \otimes I_d \left[ \begin{array}{c}
			\boldsymbol{x}\left( k \right)\\
			\boldsymbol{y}\left( k \right)\\
		\end{array} \right].
	\end{split}
\end{equation}
%\begin{equation}
%	\label{eq: DIG system}
%	\begin{cases}
%		\left[ \begin{array}{c}
%			\boldsymbol{x}\left( k+1 \right)\\
%			\boldsymbol{y}\left( k+1 \right)\\
%		\end{array} \right] =J_0 \left[ \begin{array}{c}
%			\boldsymbol{x}\left( k \right)\\
%			\boldsymbol{y}\left( k \right)\\
%		\end{array} \right] +\left[ \begin{array}{c}
%			0\\
%			-LI\\
%		\end{array} \right] u\left( k \right)\\
%		u\left( k \right) =\left[ \begin{matrix}
%			\varepsilon \mathcal{L}&		\alpha I\\
%		\end{matrix} \right] \left[ \begin{array}{c}
%			\boldsymbol{x}\left( k \right)\\
%			\boldsymbol{y}\left( k \right)\\
%		\end{array} \right]
%	\end{cases},
%\end{equation}
%where $J_0=\left[ \begin{matrix}
%	I-\varepsilon \mathcal{L}&		-\alpha I\\
%	0&		I-\varepsilon \mathcal{L}\\
%\end{matrix} \right]$.

Following the singular value decomposition of the Laplacian matrix $\mathcal{L} =U\varLambda U^T$ in Lemma \ref{lemma 1}, we give the unitary transformation of the state space. %give/introduce
The unitary coordinate transformation at iteration $k$ is formulated by 
\begin{equation*}
%	\label{equ unitary decomp}
	\tilde{\boldsymbol{x}}\left( k \right) =U^T\otimes I_d \boldsymbol{x}\left( k \right) ,\, \tilde{\boldsymbol{y}}\left( k \right) =U^T\otimes I_d \boldsymbol{y}\left( k \right), \, \forall  \boldsymbol{x},\boldsymbol{y} \in \mathbb{R}^{Nd} .
\end{equation*}
%	$$$$
Denote $\tilde{\boldsymbol{x}}=\left[ \left( \tilde{x}_1 \right) ^T,\left( \tilde{x}_2 \right) ^T,\cdots ,\left( \tilde{x}_N \right) ^T \right] ^T\in \mathbb{R} ^{Nd}$, where $\tilde{x}_i=\left( \tilde{x}_{i}^{1},\tilde{x}_{i}^{2},\cdots ,\tilde{x}_{i}^{d} \right) ^T\in \mathbb{R}^d,\,\, \forall i=1,\cdots,N$. 

Using singular value decomposition of Laplacian matrix as $\mathcal{L} =U\varLambda U^T$, with $\varLambda =\mathrm{diag} \{ 0,\lambda _2,\cdots ,\lambda _N \}  $, the system in \eqref{eq: DIG system} can be written as
\begin{equation}
	\label{eq: DIG system unitary}
	\begin{split}
		\left[ \begin{array}{c}
			\tilde{\boldsymbol{x}}\left( k+1 \right)\\
			\tilde{\boldsymbol{y}}\left( k+1 \right)\\
		\end{array} \right] =&\left[ \begin{matrix}
		I-\varepsilon \varLambda&		-\alpha I\\
		0&		I-\varepsilon \varLambda\\
		\end{matrix} \right] \otimes I_d \left[ \begin{array}{c}
			\tilde{\boldsymbol{x}}\left( k \right)\\
			\tilde{\boldsymbol{y}}\left( k \right)\\
		\end{array} \right] \\ &+\left[ \begin{array}{c}
			0\\
			-I\\
		\end{array} \right] \otimes I_d \tilde{u}\left( k \right),\\
		\tilde{u}\left( k \right) =&\left[ \begin{matrix}
			\varepsilon \varLambda&		\alpha I\\
		\end{matrix} \right] \otimes I_d \left[ \begin{array}{c}
			\tilde{\boldsymbol{x}}\left( k \right)\\
			\tilde{\boldsymbol{y}}\left( k \right)\\
		\end{array} \right].\\
	\end{split}
\end{equation}
%\begin{gather}
%	\label{eq: DIG system unitary}
%	\begin{cases}
%		\left[ \begin{array}{c}
%			\tilde{\boldsymbol{x}}\left( k+1 \right)\\
%			\tilde{\boldsymbol{y}}\left( k+1 \right)\\
%		\end{array} \right] =J^{\circ} \left[ \begin{array}{c}
%			\tilde{\boldsymbol{x}}\left( k \right)\\
%			\tilde{\boldsymbol{y}}\left( k \right)\\
%		\end{array} \right] +\left[ \begin{array}{c}
%			0\\
%			-LI\\
%		\end{array} \right]^{\circ} \tilde{u}\left( k \right)\\
%		\tilde{u}\left( k \right) =\left[ \begin{matrix}
%			\varepsilon \varLambda&		\alpha I\\
%		\end{matrix} \right]^{\circ} \left[ \begin{array}{c}
%			\tilde{\boldsymbol{x}}\left( k \right)\\
%			\tilde{\boldsymbol{y}}\left( k \right)\\
%		\end{array} \right]\\
%	\end{cases},
%\end{gather}
%where $J=\left[ \begin{matrix}
%	I-\varepsilon \varLambda&		-\alpha I\\
%	0&		I-\varepsilon \varLambda\\
%\end{matrix} \right]$.

Note that $\tilde{\boldsymbol{x}}\left( k+1 \right) \triangleq \left[ \begin{array}{c}
	\tilde{x}_1\left( k+1 \right)\\
	\tilde{\boldsymbol{x}}_{2:N}\left( k+1 \right)\\
\end{array} \right] $, with $\tilde{\boldsymbol{x}}_{2:N}\left( k+1 \right) =\left[ \left( \tilde{x}_2\left( k+1 \right) \right) ^T,\cdots ,\left( \tilde{x}_N\left( k+1 \right) \right) ^T \right] ^T$. Decompose $\tilde{x}_1\left( k \right) $ separately, we can write \eqref{eq: DIG system unitary} as
\begin{gather}
	\label{system-separate1}
	\tilde{x}_1\left( k+1 \right) =\tilde{x}_1\left( k \right) -\alpha \tilde{y}_1\left( k \right) ,
	\\
	\label{system-separate2}
	\left[ \begin{array}{c}
		\tilde{\boldsymbol{x}}_{2:N}\left( k+1 \right)\\
		\tilde{\boldsymbol{y}}\left( k+1 \right)\\
	\end{array} \right] =J \left[ \begin{array}{c}
		\tilde{\boldsymbol{x}}_{2:N}\left( k \right)\\
		\tilde{\boldsymbol{y}}\left( k \right)\\
	\end{array} \right] +\left[ \begin{array}{c}
		0_{\left( N-1 \right)d \times Nd}\\
		-I_{Nd}\\
	\end{array} \right] \tilde{u}\left( k \right) ,
	\\
	\label{system-separate3}
	\tilde{u}\left( k \right) =\left[ \begin{array}{c}
		0_{1\times \left( N-1 \right)}\\
		\varepsilon \varLambda _{2:N}\\
	\end{array} \middle| \alpha I_N \right] _{N\times \left( N-1+N \right)} \otimes I_d \left[ \begin{array}{c}
		\tilde{\boldsymbol{x}}_{2:N}\left( k \right)\\
		\tilde{\boldsymbol{y}}\left( k \right)\\
	\end{array} \right] ,
\end{gather}
where $J=\left[ \begin{matrix}
	I_{N-1}-\varepsilon \varLambda _{2:N}&		\left[ \begin{matrix}
		0_{\left( N-1 \right) \times 1}&		-\alpha I_{N-1}\\
	\end{matrix} \right]\\
	0_{N\times \left( N-1 \right)}&		I_N-\varepsilon \varLambda _N\\
\end{matrix} \right] \otimes I_d$, and
%.
%In \eqref{system-separate2}-\eqref{system-separate3}, 
$\varLambda _{2:N}=\mathrm{diag}\left\{ \lambda _2,\cdots ,\lambda _N \right\} $.
The system \eqref{system-separate1}-\eqref{system-separate3} is shown in Fig. \ref{fig:DIG system}.

\begin{figure}[h]  % DIGing 二次型函数 i=1
	\centering
	% 流程图定义基本形状
	\tikzstyle{process} = [rectangle, minimum width=1cm, minimum height=1cm, text centered, text width = 1cm, inner sep = 8pt, draw=black]
	\tikzstyle{process_thin} = [rectangle, minimum width=0.5cm, minimum height=0.8cm, text centered, text width = 0.5cm, inner sep = 8pt, draw=black]
	\tikzstyle{cycle} = [circle, minimum width=0.25cm, minimum height=0.25cm, text centered, inner sep = 1.5pt, draw=black, fill=white]
	% text width = 2.4cm, inner sep = 8pt 保证了可以用“\\”换行
	% 箭头形式
	\tikzstyle{arrow} = [->,>=stealth]
	\begin{tikzpicture}[node distance=0.4cm,
		arrow1/.style = {draw = black, line width=2pt, {Latex[length = 2mm, width = 2.5mm]}-{Latex[length = 2mm, width = 2.5mm]},},]
		%定义流程图具体形状
		\node(pro1)[process_thin, line width=1.2pt, yshift = -1cm]{$z^{-1}$};
		\node(pro2)[process_thin, line width=1.2pt, below of = pro1, yshift = -0.8cm]{$\alpha$};
		\node(pro3)[process, line width=1.2pt, below of = pro2, yshift = -1cm]{$G(z)$};
		\node(pro4)[process, line width=1.2pt, below of = pro3, yshift = -1cm]{$I_{Nd}$};
		\node(cir1)[cycle, line width=1pt, left of = pro1, xshift=-2.0cm]{};
		\node(cir2)[cycle, line width=1pt, left of = pro3, xshift=-2.5cm]{};
		\coordinate (point1) at (-2.4cm, -2.2cm); %α左
		\coordinate (point2) at (-3cm, -1cm);
		\coordinate (point10) at (-2.4cm, -1.2cm); %-号
		\coordinate (point3) at (2.1cm, -1cm); %z逆右
		\coordinate (point4) at (2.1cm, 0.02cm); %最右上角
		\coordinate (point8) at (-2.4cm, 0.02cm); %最左上角
		\coordinate (point9) at (-2.4cm, -0.8cm); %+号
		\coordinate (point5) at (1.8cm, -2.2cm); %α右
		\coordinate (point6) at (1.8cm, -3.4cm);
		\coordinate (point7) at (0.78cm, -3.4cm);
		\coordinate (point11) at (3.2cm, -3.6cm); %G(z)到u(k)
		\coordinate (point12) at (2.7cm, -3.61cm); %输出u(k)的位置
		\coordinate (point13) at (2.7cm, -5cm); %LI右
		\coordinate (point14) at (-2.9cm, -5cm); %LI左
		\coordinate (point15) at (-2.9cm, -4.3cm); %下面的+号
		%连接具体形状
		\draw [very thick](pro2) -- node [left] {} (point1);
		\draw [very thick][arrow](point1) -- (cir1);
		\draw [very thick](point1) |- node[below]{$-\,\,\,\,\,\,\,\,$}(point10); %-号
		\draw [very thick][arrow](cir1) --node[above]{$\tilde{x}_1\left( k+1 \right) $}(pro1);
		\draw [very thick](pro1) -- node [above] {$\tilde{x}_1\left( k \right) $} (point3);
		\draw [very thick](point3) -- (point4);
		\draw [very thick](point4) -- node [right] {} (point8);
		\draw [very thick](point8) |- node[above]{$+\,\,\,\,\,\,\,\,$}(point9); %+号
		\draw [very thick][arrow](point8) -- (cir1);
		\draw [very thick][arrow](point5)--(pro2);
		\draw [very thick](point6)--node[right]{$\tilde{y}_1\left( k \right) $}(point5);
		\draw [very thick][arrow](point7) -- (point6);
		\draw [very thick][arrow](pro3) -- (point11);
		\draw [very thick](point13) |- node[above]{$\,\,\,\,\tilde{u}(k)$}(point12);
		\draw [very thick][arrow](point13) -- (pro4);
		\draw [very thick](pro4) -- (point14);
		\draw [very thick][arrow](point14) -- (cir2);
		\draw [very thick][arrow](cir2) -- node[above]{$\tilde{u}(k)$}(pro3);
		\draw [very thick](point14) |- node[above]{$+\,\,\,\,\,\,\,\,$}(point15); %+号
	\end{tikzpicture}
	\caption{The whole system after unitary transformation} %for solving average consensus
	%	The whole discrete dynamic system of DIGing after unitary transformation
	\label{fig:DIG system}
\end{figure}

%Next, we investigate the \textcolor{black}{worst-case optimal convergence rate} of system \eqref{system-separate1}-\eqref{system-separate3}. 
%/stability

Now consider only the subsystem of \eqref{system-separate2}-\eqref{system-separate3}.
The transfer matrix $G(z)$ can be computed as in \eqref{eq: G(z)} (at the bottom of next page), where $\varDelta _1\left( z \right) =zI_{N-1}-I_{N-1}+\varepsilon \varLambda _{2:N}$, $\varDelta _2\left( z \right) =zI_N-I_N+\varepsilon \varLambda _N$.
%only
%\begin{multicols}{1}
%	\begin{equation}
%		
%	\end{equation}
%\end{multicols}
%where 
Then, 
\begin{equation*}
	\begin{split}
		G\left( z \right) =&-\alpha \left( zI_N-I_N+\varepsilon \varLambda _N \right) ^{-1} \otimes I_d\\
		&+\left[ \begin{matrix}
			0&		0\\
			0&		\alpha \varepsilon \varLambda _{2:N}\left( zI_{N-1}-I_{N-1}+\varepsilon \varLambda _{2:N} \right) ^{-2}\\
		\end{matrix} \right] \otimes I_d
		\\
		=&\left[ \begin{matrix}
			\frac{-\alpha}{z-1}&		&		&		\\
			&		\frac{-\alpha \left( z-1 \right)}{\left( z-1+\varepsilon \lambda _2 \right) ^2}&		&		\\
			&		&		\ddots&		\\
			&		&		&		\frac{-\alpha \left( z-1 \right)}{\left( z-1+\varepsilon \lambda _N \right) ^2}\\
		\end{matrix} \right] \otimes I_d.
	\end{split}
\end{equation*}
So far, the problem becomes to find $\alpha$, $\varepsilon$, so that the subsystem \eqref{system-separate2}-\eqref{system-separate3} converges to \textcolor{black}{zero} as fast as possible, \textcolor{black}{for all networks} with $\lambda _i\in \left[ \lambda _2,\lambda _N \right],\,i=2,\cdots,N$. 
%\textcolor{blue}{Note that with Assumption} \ref{ass graph} holds, we have $0<\lambda _2\leqslant \lambda _N$.
The subsystem \eqref{system-separate2}-\eqref{system-separate3} is shown in Fig. \ref{fig:DIG system2}.

\begin{figure}[h]  % DIGing 二次型函数 i=1
	\centering
	% 流程图定义基本形状
	\tikzstyle{process} = [rectangle, minimum width=1cm, minimum height=1cm, text centered, text width = 1cm, inner sep = 8pt, draw=black]
	\tikzstyle{process_thin} = [rectangle, minimum width=1cm, minimum height=1cm, text centered, text width = 1cm, inner sep = 8pt, draw=black]
	\tikzstyle{cycle} = [circle, minimum width=0.25cm, minimum height=0.25cm, text centered, inner sep = 1.5pt, draw=black, fill=white]
	% text width = 2.4cm, inner sep = 8pt 保证了可以用“\\”换行
	% 箭头形式
	\tikzstyle{arrow} = [->,>=stealth]
	\begin{tikzpicture}[node distance=0.5cm]
		%定义流程图具体形状
		\node(pro1)[process_thin, line width=1.2pt, yshift = -1cm]{$G(z)$};
		\node(pro2)[process, line width=1.2pt, below of = pro1, yshift = -1cm]{$I_{Nd}$};
		%			\node(pro3)[process, below of = pro2, yshift = -1cm]{$G(z)$};
		%			\node(pro4)[process, below of = pro3, yshift = -1cm]{$\varSigma _Q$};
		\node(cir1)[cycle, line width=1pt, left of = pro1, xshift=-2cm]{};
		%			\node(cir2)[cycle, left of = pro3, xshift=-2.5cm]{};
		\coordinate (point1) at (-2.5cm, -2.5cm);
		\coordinate (point2) at (-2.5cm, -1cm);
		\coordinate (point10) at (-2.5cm, -1.2cm); %+号
		\coordinate (point3) at (2.5cm, -1cm);
		%			\coordinate (point4) at (2.5cm, 0.5cm);
		%			\coordinate (point8) at (-3cm, 0.5cm);
		%			\coordinate (point9) at (-3cm, -0.85cm); %+号
		\coordinate (point5) at (2.5cm, -2.5cm);
		%			\coordinate (point6) at (2cm, -3.7cm);
		%			\coordinate (point7) at (0.78cm, -3.7cm);
		%			\coordinate (point11) at (3.5cm, -4cm);
		%			\coordinate (point12) at (3cm, -4cm);
		%			\coordinate (point13) at (3cm, -5.5cm);
		%			\coordinate (point14) at (-3cm, -5.5cm);
		%			\coordinate (point15) at (-3cm, -4.65cm);
		%连接具体形状
		\draw [very thick](pro2) -- node [left] {} (point1);
		\draw [very thick][arrow](point1) -- (cir1);
		\draw [very thick](point1) |- node[below]{$+\,\,\,\,\,\,\,\,$}(point10); %-号
		\draw [very thick][arrow](cir1) --node[above]{$\tilde{u}(k)$}(pro1);
		\draw [very thick][arrow](pro1) -- node [above] {$\tilde{u}(k)$} (point3);
		%			\draw (point3) -- (point4);
		%			\draw (point4) -- node [right] {} (point8);
		%			\draw (point8) |- node[above]{$+\,\,\,\,\,\,\,\,$}(point9); %+号
		%			\draw [arrow](point8) -- (cir1);
		\draw [very thick][arrow](point5)--(pro2);
		\draw [very thick](point3)--(point5);
		%			\draw [arrow](point7) -- (point6);
		%			\draw [arrow](pro3) -- (point11);
		%			\draw (point13) |- node[above]{$\,\,\,\,v(k)$}(point12);
		%			\draw [arrow](point13) -- (pro4);
		%			\draw (pro4) -- (point14);
		%			\draw [arrow](point14) -- (cir2);
		%			\draw [arrow](cir2) -- node[above]{$u(k)$}(pro3);
		%			\draw (point14) |- node[above]{$+\,\,\,\,\,\,\,\,$}(point15); %+号
	\end{tikzpicture}
	\caption{The second subsystem after decomposition} %The second subsystem of DIGing after graph frequency domain decomposition
	\label{fig:DIG system2}
\end{figure}

\begin{figure*}[hb]
	\begin{equation}
		\label{eq: G(z)}
		\begin{split}
			G\left( z \right) &=\left[ \begin{array}{c}
				0_{1\times \left( N-1 \right)}\\
				\varepsilon \varLambda _{2:N}\\
			\end{array} \middle| \alpha I_N \right] \left[ \begin{matrix}
				zI_{N-1}-I_{N-1}+\varepsilon \varLambda _{2:N}&		\left[ \begin{matrix}
					0_{\left( N-1 \right) \times 1}&		\alpha I_{N-1}\\
				\end{matrix} \right]\\
				0_{N\times \left( N-1 \right)}&		zI_N-I_N+\varepsilon \varLambda _N\\
			\end{matrix} \right] ^{-1}\left[ \begin{array}{c}
				0_{\left( N-1 \right) \times N}\\
				-I_N\\
			\end{array} \right]  \otimes I_d
			\\
			&=\left[ \begin{array}{c}
				0_{1\times \left( N-1 \right)}\\
				\varepsilon \varLambda _{2:N}\\
			\end{array} \middle| \alpha I_N \right] \left[ \begin{matrix}
				\varDelta _{1}^{-1}\left( z \right)&		-\varDelta _{1}^{-1}\left( z \right) \left[ \begin{matrix}
					0_{\left( N-1 \right) \times 1}&		\alpha I_{N-1}\\
				\end{matrix} \right] \varDelta _{2}^{-1}\left( z \right)\\
				0_{N\times \left( N-1 \right)}&		\varDelta _{2}^{-1}\left( z \right)\\
			\end{matrix} \right] \left[ \begin{array}{c}
				0_{\left( N-1 \right) \times N}\\
				-I_N\\
			\end{array} \right]  \otimes I_d
%			\\
%			&=\left[ \begin{array}{c}
%				0_{1\times \left( N-1 \right)}\\
%				\varepsilon \varLambda _{2:N}\\
%			\end{array} \middle| \alpha I_N \right] \left[ \begin{matrix}
%				\varDelta _{1}^{-1}\left( z \right)&		-\left[ \begin{matrix}
%					0_{\left( N-1 \right) \times 1}&		\alpha \varDelta _{1}^{-2}\left( z \right)\\
%				\end{matrix} \right]\\
%				0_{N\times \left( N-1 \right)}&		\varDelta _{2}^{-1}\left( z \right)\\
%			\end{matrix} \right] \left[ \begin{array}{c}
%				0_{\left( N-1 \right) \times N}\\
%				-I_N\\
%			\end{array} \right]  \otimes I_d
			\\
			&=\left[ \begin{array}{c}
				0_{1\times \left( N-1 \right)}\\
				\varepsilon \varLambda _{2:N}\\
			\end{array} \middle| \alpha I_N \right] \left[ \begin{array}{c}
				\left[ \begin{matrix}
					0_{\left( N-1 \right) \times 1}&		\alpha \varDelta _{1}^{-2}\left( z \right)\\
				\end{matrix} \right]\\
				-\varDelta _{2}^{-1}\left( z \right)\\
			\end{array} \right] \otimes I_d =-\alpha \varDelta _{2}^{-1}\left( z \right) \otimes I_d +\left[ \begin{matrix}
				0&		0\\
				0&		\alpha \varepsilon \varLambda _{2:N}\varDelta _{1}^{-2}\left( z \right)\\
			\end{matrix} \right]  \otimes I_d.
		\end{split}
	\end{equation}
\end{figure*}

The problem is transformed to finding the minimum of parameter $\gamma$ such that the closed-loop system in Fig. \ref{fig:DIG system2} with $G(\gamma z)$ is stable. In the following, we give the necessary and sufficient condition for the stable system.

The closed-loop system with $G(\gamma z)$ and $I_{Nd}$ can be computed as $\left[ \begin{matrix}
	\varPhi _1\left( \gamma z \right)&		&		\\
	&		\ddots&		\\
	&		&		\varPhi _N\left( \gamma z \right)\\
\end{matrix} \right] \otimes I_d $. Here, 
$$\varPhi _1\left( \gamma z \right) =\frac{-\frac{\alpha}{\gamma z-1}}{1+\frac{\alpha}{\gamma z-1}}=\frac{-\alpha}{\gamma z-1+\alpha},$$
$$\varPhi _i\left( \gamma z \right) =\frac{-\frac{\alpha \left( \gamma z-1 \right)}{\left( \gamma z-1+\varepsilon \lambda _i \right) ^2}}{1+\frac{\alpha \left( \gamma z-1 \right)}{\left( \gamma z-1+\varepsilon \lambda _i \right) ^2}}=\frac{-\alpha \left( \gamma z-1 \right)}{\left( \gamma z-1+\varepsilon \lambda _i \right) ^2+\alpha \left( \gamma z-1 \right)}.$$
The problem becomes finding the smallest $\gamma$ so that there exist $\varepsilon$ and $\alpha$ making all $\varPhi _i\left( \gamma z \right) $s stable. It is formulated as
\begin{equation}
	\label{eq-problem}
	\begin{gathered}
		\underset{\varepsilon ,\alpha}{\min}\,\,\gamma 
		\\
		s.t.\, \varPhi _i\left( \gamma z \right) \,\,stable.
%		\, i.e.\, all\,\,poles\,\,of\,\,\varPhi _i\left( \gamma z \right) \,\,are\,\,within\,\,the\,\,unit\,\,circle.
	\end{gathered}
\end{equation}
Note that $\varPhi _i\left( \gamma z \right)$ is stable if and only if all poles of $\varPhi _i\left( \gamma z \right)$ are within the unit circle. Then, $\varPhi _1\left( \gamma z \right) $ stable means
$-\gamma \leqslant 1-\alpha \leqslant \gamma $.
For $\varPhi _i\left( \gamma z \right) $ with $i=2,\cdots ,N$,
setting $z=\frac{s+1}{s-1}$.
The poles of $\varPhi _i\left( \gamma z \right) $ lie in the unit circle \textit{iff} the poles of $\varPhi _i\left( s \right) $ lie in left half plane.
It \textcolor{black}{is equivalent} to the roots of $$\left( \gamma \frac{s+1}{s-1}-1 \right) ^2+\left( 2\varepsilon \lambda _i+\alpha \right) \left( \gamma \frac{s+1}{s-1}-1 \right) +\varepsilon ^2\lambda _{i}^{2}=0$$ lie in left half plane.
%It equals to
%$$\gamma ^2\left( \frac{s+1}{s-1} \right) ^2-2\gamma \frac{s+1}{s-1}+1+\left( 2\varepsilon \lambda _i+L\alpha \right) \left( \gamma \frac{s+1}{s-1}-1 \right) +\varepsilon ^2\lambda _{i}^{2}=0.$$ It equals to
%$$\gamma ^2\left( s+1 \right) ^2-2\gamma \left( s+1 \right) \left( s-1 \right) +\left( s-1 \right) ^2+\left( 2\varepsilon \lambda _i+L\alpha \right) \left[ \gamma \left( s+1 \right) \left( s-1 \right) -\left( s-1 \right) ^2 \right] +\varepsilon ^2\lambda _{i}^{2}\left( s-1 \right) ^2=0.$$
Rewrite the equation as the quadratic polynomial of $s$, we obtain the following characteristic equation
%the formula \eqref{eq-s_poly} as
\begin{equation}
	\label{eq-s_poly}
	\begin{split}
		\left[ \left( \gamma -1 \right) ^2+\varepsilon ^2\lambda _{i}^{2}+\left( \gamma -1 \right) \left( 2\varepsilon \lambda _i+\alpha \right) \right] s^2+
		\\
		2\left[ \gamma ^2-1+2\varepsilon \lambda _i+\alpha -\varepsilon ^2\lambda _{i}^{2} \right] s+
		\\
		\left( \gamma +1 \right) ^2+\varepsilon ^2\lambda _{i}^{2}-\left( \gamma +1 \right) \left( 2\varepsilon \lambda _i+\alpha \right) =0.
	\end{split}
\end{equation}
Using \textit{Routh's Stability Criterion} in \cite{nise2019control}, we know that the roots of \eqref{eq-s_poly} lie in left half plane if and only if all the coefficients in \eqref{eq-s_poly} are non-negative. That is,
\begin{equation*}
	\begin{split}
		\begin{cases}
			\left( \gamma -1 \right) ^2+\varepsilon ^2\lambda _{i}^{2}+\left( \gamma -1 \right) \left( 2\varepsilon \lambda _i+\alpha \right) \geqslant 0\\
			\gamma ^2-1+2\varepsilon \lambda _i+\alpha -\varepsilon ^2\lambda _{i}^{2}\geqslant 0\\
			\left( \gamma +1 \right) ^2+\varepsilon ^2\lambda _{i}^{2}-\left( \gamma +1 \right) \left( 2\varepsilon \lambda _i+\alpha \right) \geqslant 0\\
		\end{cases},
	\end{split}
\end{equation*}
holds for all $\lambda _i\in \left[ \lambda _2,\lambda _N \right] $.
%$\forall \lambda _i\in \left\{ \lambda _2,\cdots ,\lambda _N \right\} $. 

Then, the optimization problem of \eqref{eq-problem} becomes to
\begin{equation}
	\label{eq: solve the eq}
	\underset{\alpha>0 ,\varepsilon>0}{\min}\,\,\gamma 
\end{equation}
$s.t.$
\begin{subnumcases}{}
	\alpha \leqslant 1+\gamma, \label{eq: solve sub-1} \\
	\alpha \geqslant 1-\gamma, \label{eq: solve sub-2} \\
	\left( \varepsilon \lambda _i-1+\gamma \right) ^2-\alpha \left( 1-\gamma \right) \geqslant 0, \label{eq: solve sub-3}\\ 
	\left( \varepsilon \lambda _i-1-\gamma \right) ^2-\alpha \left( 1+\gamma \right) \geqslant 0, \label{eq: solve sub-4} \\
	\gamma ^2-\left( 1-\varepsilon \lambda _i \right) ^2+\alpha \geqslant 0 \label{eq: solve sub-5},
\end{subnumcases}
%\begin{subequations}
%	\begin{align}
%%		\begin{cases}
%			L\alpha \leqslant 1+\gamma \\
%%			\,\,\,\,\,\,       \textcircled{1}\\
%			L\alpha \geqslant 1-\gamma \\
%%			\,\,\,\,\,\,       \textcircled{2}\\
%			\left( \varepsilon \lambda _i-1+\gamma \right) ^2-L\alpha \left( 1-\gamma \right) \geqslant 0\\ 
%%\,\,\,\,\,\,      \textcircled{3}\\
%			\left( \varepsilon \lambda _i-1-\gamma \right) ^2-L\alpha \left( 1+\gamma \right) \geqslant 0 \\
%%			 \,\,\,\,\,\,     \textcircled{4}\\
%			\gamma ^2-\left( 1-\varepsilon \lambda _i \right) ^2+L\alpha \geqslant 0   
%%			\,\,\,\,\,\,     \textcircled{5}\\
%%		\end{cases},
%	\end{align}
%\end{subequations}
where inequalities \eqref{eq: solve sub-3}-\eqref{eq: solve sub-5} hold for all $\lambda _i\in \left[ \lambda _2,\lambda _N \right] $.
%\textcolor{blue}{Note that due to $\gamma$ being the convergence rate of the system, it satisfies $0<\gamma \leqslant 1$.}

\subsection{Optimal Parameter Design}
%\subsection{Problem Solving and Parameter Design}
In this subsection, we will solve the minimization problem \eqref{eq: solve the eq} with constraints \eqref{eq: solve sub-1}-\eqref{eq: solve sub-5}, which gives the optimal convergence rate $\gamma ^*$ and the corresponding parameters $\alpha^* $ and $\varepsilon^* $.
%solve the transformed linear optimization problem subject to several quadratic constraints and derive the optimal parameter design for DIGing.
%Consider the problem in \eqref{eq: solve the eq}.
First, we present two lemmas that will be used to derive the main result.
%, which give the upper bound and lower bound for parameter $\varepsilon$.
%	/the following two lemmas hold.

%Next, we derive the lower bound for parameter $\varepsilon$.
\begin{Lem}
	\label{lem: epsilon lambda_2}
	Let $\gamma \leq 1$. Assume $\varepsilon >0$ and $\alpha >0$ satisfy inequalities \eqref{eq: solve sub-1}-\eqref{eq: solve sub-5} for all $\lambda _i \in [\lambda _2, \lambda _N]$.
%	Assume Assumption \ref{ass graph} holds. Consider the problem in \eqref{eq: solve the eq} subject to inequalities \eqref{eq: solve sub-1}-\eqref{eq: solve sub-5}.
%	Considering problem \eqref{eq: solve the eq}. 
	Then, we have $\varepsilon \lambda _2\geqslant 2\left( 1-\gamma \right) $ holds.
\end{Lem}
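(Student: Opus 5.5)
The plan is to use only constraints \eqref{eq: solve sub-2} and \eqref{eq: solve sub-3}, evaluated at the single point $\lambda_i=\lambda_2$. Write $t\triangleq \varepsilon\lambda_2$. By Assumption \ref{ass graph} and Lemma \ref{lemma 1}, connectivity gives $\lambda_2>0$, and $\varepsilon>0$ by hypothesis, so $t>0$. If $\gamma=1$ the claim reads $t\geqslant 0$, which already holds. So I assume $0\le\gamma<1$ from now on, so that $1-\gamma>0$.

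First, multiply \eqref{eq: solve sub-2}, i.e. $\alpha\geqslant 1-\gamma$, by the positive number $1-\gamma$. This gives $\alpha(1-\gamma)\geqslant (1-\gamma)^2$. Substituting into \eqref{eq: solve sub-3} at $\lambda_i=\lambda_2$ yields
\begin{equation*}
\left( t-(1-\gamma)\right)^2 \geqslant \alpha(1-\gamma) \geqslant (1-\gamma)^2 ,
\end{equation*}
so $|t-(1-\gamma)|\geqslant 1-\gamma$.

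This absolute-value inequality splits into two cases: $t\leqslant 0$ or $t\geqslant 2(1-\gamma)$. The first case contradicts $t>0$. Hence $\varepsilon\lambda_2\geqslant 2(1-\gamma)$, as claimed.

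There is no real obstacle here. The only points needing care are the sign of $1-\gamma$ when multiplying \eqref{eq: solve sub-2}, which is why the case $\gamma=1$ is handled separately and the assumption $\gamma\le 1$ is used, and the strict positivity of $\lambda_2$, which is what excludes the branch $t\le 0$. Since \eqref{eq: solve sub-3} holds for every $\lambda_i\in[\lambda_2,\lambda_N]$, the same argument in fact gives $\varepsilon\lambda_i\geqslant 2(1-\gamma)$ on the whole interval; the binding case is $\lambda_2$, as stated.
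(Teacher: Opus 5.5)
Your proof is correct and is essentially the paper's proof. Like the paper, you combine \eqref{eq: solve sub-2} and \eqref{eq: solve sub-3} to get $(\varepsilon\lambda_2-1+\gamma)^2\geqslant(1-\gamma)^2$, equivalently $\varepsilon\lambda_2(\varepsilon\lambda_2-2+2\gamma)\geqslant 0$, and use $\varepsilon\lambda_2>0$ to discard the other branch. Two small points: treating $\gamma=1$ separately is harmless but unnecessary, since multiplying by $1-\gamma\geqslant 0$ already works, and the lower bound $\gamma\geqslant 0$ you impose is never actually used.
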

\begin{pf}
	From \eqref{eq: solve sub-2} and \eqref{eq: solve sub-3}, we obtain
	\begin{equation}
		\label{eq: lem 3}
		\left( \varepsilon \lambda _i-1+\gamma \right) ^2\geqslant \alpha \left( 1-\gamma \right) \geqslant \left( 1-\gamma \right) ^2.
	\end{equation}
	Then, $\varepsilon ^2\lambda _{i}^{2}-2\left( 1-\gamma \right) \varepsilon \lambda _i+\left( 1-\gamma \right) ^2\geqslant \left( 1-\gamma \right) ^2.$
	
	Then, we have $\varepsilon \lambda _i\left( \varepsilon \lambda _i-2+2\gamma \right) \geqslant 0,$
	which holds for $\forall \lambda _i\in \left[ \lambda _2,\lambda _N \right] $.
	Then, $\varepsilon \lambda _2-2+2\gamma \geqslant 0$, which is equivalent to $\varepsilon \lambda _2\geqslant 2\left( 1-\gamma \right) $.
\end{pf}

Similarly, we can get the following lemma.
\begin{Lem}
	\label{lem: epsilon lambda_N}
	Let $\gamma \leq 1$. Assume $\varepsilon >0$ and $\alpha >0$ satisfy inequalities \eqref{eq: solve sub-1}-\eqref{eq: solve sub-5} for all $\lambda _i \in [\lambda _2, \lambda _N]$.
	%	Assume Assumption \ref{ass graph} holds. Consider the problem in \eqref{eq: solve the eq} subject to inequalities \eqref{eq: solve sub-1}-\eqref{eq: solve sub-5}. 
	Then, we have $\varepsilon \lambda _N\leqslant 1+\gamma -\sqrt{1-\gamma ^2}$ holds.
\end{Lem}
%\begin{pf}
%	From \eqref{eq: solve sub-2} and \eqref{eq: solve sub-4}, we obtain
%	\begin{equation}
%		\label{eq: lem2}
%		\left( \varepsilon \lambda _i-1-\gamma \right) ^2\geqslant \alpha \left( 1+\gamma \right) \geqslant \left( 1-\gamma \right) \left( 1+\gamma \right) =1-\gamma ^2.
%	\end{equation}
%	Next, we use the method of proof by contradiction. Assume $\varepsilon \lambda _N>1+\gamma $ holds, then there exists $\lambda _j<\lambda _N$, such that $\varepsilon \lambda _j-1-\gamma =0$. Then, we have $0\geqslant 1-\gamma ^2$, which does not hold for $\gamma <1$. Contradiction.\\
%	Then, $\varepsilon \lambda _N<1+\gamma $ holds.
%	By \eqref{eq: lem2}, we have $$\left( 1+\gamma -\varepsilon \lambda _N \right) ^2\geqslant 1-\gamma ^2.$$ Then, $1+\gamma -\varepsilon \lambda _N\geqslant \sqrt{1-\gamma ^2}.$ \\
%	This is equivalent to $\varepsilon \lambda _N\leqslant 1+\gamma -\sqrt{1-\gamma ^2}$.
%\end{pf}

Following Lemma \ref{lem: epsilon lambda_2} and Lemma \ref{lem: epsilon lambda_N}, we can prove Theorem \ref{th1}, which provides the optimal convergence rate of system \eqref{system-separate2}-\eqref{system-separate3} and the corresponding parameters.
%give the proof of Theorem 1, which provides the optimal parameter design and the optimal convergence rate of the system \eqref{eq: system}.

\begin{thm}
	\label{th1}
	Suppose Assumption \ref{ass graph} holds. Consider the system in \eqref{system-separate2}-\eqref{system-separate3} \textcolor{black}{with $\lambda _i\in \left[ \lambda _2,\lambda _N \right],\,i=2,\cdots,N$}.
%	, where $\lambda_i$'s are eigenvalues of the Laplacian matrix.
	Let $\kappa \triangleq \frac{\lambda _2}{\lambda _N} \in \left( 0,1 \right]$.
%	\textcolor{black}{, then} $\kappa  $. 
	Then, the worst-case optimal convergence rate of system \eqref{system-separate2}-\eqref{system-separate3} is $\gamma ^*=\frac{4-\kappa ^2+\kappa \sqrt{\kappa ^2+8\kappa}}{2\left( \kappa ^2+2\kappa +2 \right)}$, and the corresponding parameters are 
	\begin{equation}
		\label{eq: opt parameters}
		\alpha ^*=1-\gamma ^*,\,\, \varepsilon ^*=\frac{2}{\lambda _2}\left( 1-\gamma ^* \right).
	\end{equation}
\end{thm}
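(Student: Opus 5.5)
The proof has two halves. The lower bound combines Lemma \ref{lem: epsilon lambda_2} and Lemma \ref{lem: epsilon lambda_N}, which squeeze $\varepsilon$ from both sides. The achievability half checks that the proposed $(\alpha^*,\varepsilon^*)$ satisfies all of \eqref{eq: solve sub-1}--\eqref{eq: solve sub-5}.

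\emph{Lower bound.} Fix any feasible $(\gamma,\alpha,\varepsilon)$ with $\gamma\le 1$. From Lemma \ref{lem: epsilon lambda_2} we have $\varepsilon\lambda_N=\varepsilon\lambda_2/\kappa\ge 2(1-\gamma)/\kappa$. From Lemma \ref{lem: epsilon lambda_N} we have $\varepsilon\lambda_N\le 1+\gamma-\sqrt{1-\gamma^2}$. Hence feasibility forces
\begin{equation*}
\frac{2(1-\gamma)}{\kappa}\le 1+\gamma-\sqrt{1-\gamma^2}.
\end{equation*}
Divide by $1-\gamma>0$ (the case $\gamma=1$ is trivial) and set $t=\sqrt{(1+\gamma)/(1-\gamma)}\ge 1$. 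The inequality becomes $2/\kappa\le t^2-t$, that is, $t\ge \tfrac{1}{2}\bigl(1+\sqrt{1+8/\kappa}\bigr)$. Since $\gamma=(t^2-1)/(t^2+1)$ is increasing in $t$, this yields $\gamma\ge\gamma^*$, where $\gamma^*$ is the value at the boundary $t$.

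The main computational obstacle is simplifying $(t^2-1)/(t^2+1)$ at $t=\tfrac12(1+\sqrt{1+8/\kappa})$ into the stated closed form. I would write $t^2=t+2/\kappa$ and $\sqrt{1+8/\kappa}=\sqrt{\kappa^2+8\kappa}/\kappa$, then rationalize. Doing so should give $\gamma^*=\frac{4-\kappa^2+\kappa\sqrt{\kappa^2+8\kappa}}{2(\kappa^2+2\kappa+2)}$; I would confirm it by substituting back into $t^2-t=2/\kappa$.

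\emph{Achievability.} Take $\gamma=\gamma^*$, $\alpha^*=1-\gamma^*$ and $\varepsilon^*=2(1-\gamma^*)/\lambda_2$. By construction, equality holds in both lemma bounds, so $\varepsilon^*\lambda_N=1+\gamma^*-\sqrt{1-\gamma^{*2}}$. Write $\mu=\varepsilon^*\lambda_i\in[2(1-\gamma^*),\,1+\gamma^*-\sqrt{1-\gamma^{*2}}]$ and check each constraint over this interval.
\begin{itemize}
\item \eqref{eq: solve sub-1} and \eqref{eq: solve sub-2} hold trivially.
\item \eqref{eq: solve sub-3} reads $(\mu-(1-\gamma))^2\ge(1-\gamma)^2$, which holds for $\mu\ge 2(1-\gamma)$.
\item \eqref{eq: solve sub-4} reads $(1+\gamma-\mu)^2\ge 1-\gamma^2$, which holds for $\mu\le 1+\gamma-\sqrt{1-\gamma^2}$.
\item \eqref{eq: solve sub-5} reads $(1-\mu)^2\le \gamma^2+1-\gamma$. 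The left side is convex in $\mu$, so it suffices to check the two endpoints.
\end{itemize}
At the left endpoint, $(1-2(1-\gamma))^2=(2\gamma-1)^2\le\gamma^2+1-\gamma$ reduces to $3\gamma(\gamma-1)\le0$, which is true. At the right endpoint I need $(\gamma-\sqrt{1-\gamma^2})^2\le \gamma^2+1-\gamma$. This simplifies to $1-\gamma^2-2\gamma\sqrt{1-\gamma^2}\le 1-\gamma$, i.e. $\gamma-\gamma^2\le 2\gamma\sqrt{1-\gamma^2}$, i.e. $1-\gamma\le 2\sqrt{1-\gamma^2}$, which always holds.

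Hence $(\alpha^*,\varepsilon^*)$ attains $\gamma^*$ for every graph with spectrum in $[\lambda_2,\lambda_N]$, and $\gamma^*$ is optimal.
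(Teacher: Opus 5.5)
Your proposal is correct and follows the paper's proof. You squeeze $\varepsilon$ between the bounds of Lemma~\ref{lem: epsilon lambda_2} and Lemma~\ref{lem: epsilon lambda_N}, find the threshold $\gamma^*$ where the two bounds meet, and then verify \eqref{eq: solve sub-1}--\eqref{eq: solve sub-5} at $(\alpha^*,\varepsilon^*)$. The differences are only local: your substitution $t=\sqrt{(1+\gamma)/(1-\gamma)}$ solves the threshold inequality directly, so you avoid the paper's squaring step, its spurious root $\gamma_2$, and its monotonicity argument; your convexity endpoint check for \eqref{eq: solve sub-5} replaces the paper's case split on the sign of $1-\varepsilon^*\lambda_i$; and the paper additionally shows that $\alpha^*=1-\gamma^*$ is forced, whereas you only show it suffices.
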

\begin{pf}
	Following Lemma \ref{lem: epsilon lambda_2} and Lemma \ref{lem: epsilon lambda_N}, we have
	\begin{equation}
		\label{eq: epsilon}
		\frac{2}{\lambda _2}\left( 1-\gamma \right) \leqslant \varepsilon \leqslant \frac{1}{\lambda _N}\left( 1+\gamma -\sqrt{1-\gamma ^2} \right) .
	\end{equation}
	Notice that the value on the left side of inequality \eqref{eq: epsilon} increases as $\gamma$ decreases, while the value on the right side of inequality \eqref{eq: epsilon} decreases as $\gamma$ decreases. Thus, when both sides of inequality \eqref{eq: epsilon} are equal, $\gamma$ attains the minimum value.
	Taking the equality, we obtain $$\frac{2}{\lambda _2}\left( 1-\gamma \right) =\frac{1}{\lambda _N}\left( 1+\gamma -\sqrt{1-\gamma ^2} \right) .$$
	It is equivalent to
	\begin{equation}
		\label{eq: abandon the root}
		2\lambda _N-\lambda _2-\left( 2\lambda _N+\lambda _2 \right) \gamma =-\lambda _2\sqrt{1-\gamma ^2}.
	\end{equation}
	Squaring both sides of the equation \eqref{eq: abandon the root} yields
	$$\gamma ^2\left( \lambda _{2}^{2}+2\lambda _2\lambda _N+2\lambda _{N}^{2} \right) -\left( 4\lambda _{N}^{2}-\lambda _{2}^{2} \right) \gamma +2\lambda _{N}^{2}-2\lambda _2\lambda _N=0.$$
	The roots of the above equation are
	$$\gamma _1=\frac{4-\kappa ^2+\kappa \sqrt{\kappa ^2+8\kappa}}{2\left( \kappa ^2+2\kappa +2 \right)},\,\, \gamma _2=\frac{4-\kappa ^2-\kappa \sqrt{\kappa ^2+8\kappa}}{2\left( \kappa ^2+2\kappa +2 \right)}.$$
	
	Notice that substituting $\gamma_2$ into \eqref{eq: abandon the root} , the equation \eqref{eq: abandon the root} does not hold.
	Thus, the root $\gamma_2$ is discarded.
	Also, it is easy to verify that $0<\gamma _1<1$.
	Then, the optimal convergence rate is $\gamma ^*=\frac{4-\kappa ^2+\kappa \sqrt{\kappa ^2+8\kappa}}{2\left( \kappa ^2+2\kappa +2 \right)}$.
	And $\varepsilon ^*=\frac{2}{\lambda _2}\left( 1-\gamma ^* \right) $ follows from \eqref{eq: epsilon} directly.
	
	Since \eqref{eq: lem 3} holds for $\forall \lambda _i\in \left[ \lambda _2,\lambda _N \right] $, we have $$\left( \varepsilon ^*\lambda _2-1+\gamma ^{*} \right) ^2=\left( 1-\gamma ^* \right) ^2.$$
	Then, 
	\begin{gather*}
		\left( \varepsilon ^*\lambda _2-1+\gamma ^* \right) ^2-\alpha ^*\left( 1-\gamma ^* \right) \\=\left( 1-\gamma ^* \right) ^2-\alpha ^*\left( 1-\gamma ^* \right) \geqslant 0,
	\end{gather*}
	which gives $\left( 1-\gamma ^* \right) \left( 1-\gamma ^*-\alpha ^* \right) \geqslant 0.$ Thus, $1-\gamma ^*\geqslant \alpha ^*$.
	It follows from \eqref{eq: solve sub-2} that
%	Also, from \eqref{eq: solve sub-2}, we know
	$\alpha ^*\geqslant 1-\gamma ^*$.
	Then, we have $\alpha ^*=1-\gamma ^*$. 
%	Then, $\alpha ^*=\frac{1-\gamma ^*}{L}$.

	Next, we verify inequalities \eqref{eq: solve sub-1}-\eqref{eq: solve sub-5} hold for $\gamma ^*$, $\varepsilon ^*$ and $\alpha ^*$.
	Note that $\alpha ^*=1-\gamma ^*\leqslant 1+\gamma ^*$, inequalities \eqref{eq: solve sub-1} and \eqref{eq: solve sub-2} obviously hold.
	
	Substituting $\alpha ^*=1-\gamma ^*$ into \eqref{eq: solve sub-3}, we have 
	\begin{equation}
		\label{eq: verify 3}
		\left( \varepsilon ^*\lambda _i-1+\gamma ^* \right) ^2\geqslant \left( 1-\gamma ^* \right) ^2.
	\end{equation} 
	It follows from $\varepsilon ^*\lambda _i\geqslant \varepsilon ^*\lambda _2=2\left( 1-\gamma ^* \right) $ that $\varepsilon ^*\lambda _i-1+\gamma ^*\geqslant \varepsilon ^*\lambda _2-\left( 1-\gamma ^* \right) =1-\gamma ^*$.
%	 Then, \eqref{eq: verify 3} holds. 
	Then, the validity of \eqref{eq: solve sub-3} is finished.
	
	Substituting $\alpha ^*=1-\gamma ^*$ into the left side of inequality \eqref{eq: solve sub-4}, we have
	\begin{equation*}
		\begin{gathered}
			\left( \varepsilon ^*\lambda _i-1-\gamma ^* \right) ^2-\alpha ^*\left( 1+\gamma ^* \right)\\ 
			=\left( 1+\gamma ^*-\varepsilon ^*\lambda _i \right) ^2-1+\left( \gamma ^* \right) ^2.
		\end{gathered}
	\end{equation*}
	Due to $\varepsilon ^*\lambda _i\leqslant \varepsilon ^*\lambda _N<1+\gamma ^*$, we have $$1+\gamma ^*-\varepsilon ^*\lambda _i\geqslant 1+\gamma ^*-\varepsilon ^*\lambda _N>0.$$
	Then, $$\left( 1+\gamma ^*-\varepsilon ^*\lambda _i \right) ^2\geqslant \left( 1+\gamma ^*-\varepsilon ^*\lambda _N \right) ^2.$$
	From Lemma \ref{lem: epsilon lambda_N}, we know $\varepsilon ^*\lambda _N\leqslant 1+\gamma ^*-\sqrt{1-\left( \gamma ^* \right) ^2}$. It equals to $1+\gamma ^*-\varepsilon ^*\lambda _N\geqslant \sqrt{1-\left( \gamma ^* \right) ^2}\geqslant 0$.
	Then, $$\left( 1+\gamma ^*-\varepsilon ^*\lambda _N \right) ^2\geqslant 1-\left( \gamma ^* \right) ^2.$$
	Then, we have 
	\begin{gather*}
		\left( 1+\gamma ^*-\varepsilon ^*\lambda _i \right) ^2-1+\left( \gamma ^* \right) ^2\geqslant \\ \left( 1+\gamma ^*-\varepsilon ^*\lambda _N \right) ^2-1+\left( \gamma ^* \right) ^2\geqslant 0,
	\end{gather*}
	which finishes the validation
%	The validity 
	of inequality \eqref{eq: solve sub-4}. % is verified.
	
	Substituting $\alpha ^*=1-\gamma ^*$ into the left side of inequality \eqref{eq: solve sub-5}, we have
	$$\left( \gamma ^* \right) ^2-\left( 1-\varepsilon ^*\lambda _i \right) ^2+\alpha ^*=\left( \gamma ^* \right) ^2-\left( 1-\varepsilon ^*\lambda _i \right) ^2+1-\gamma ^*.$$
	In the first case, consider $\lambda_i$ satisfying $\varepsilon ^*\lambda _i-1\geqslant 0$. Then, $$0\leqslant \varepsilon ^*\lambda _i-1\leqslant \varepsilon ^*\lambda _N-1.$$
	Due to $\varepsilon ^*\lambda _N\leqslant 1+\gamma ^*$, we have $\varepsilon ^*\lambda _N-1\leqslant \gamma ^*$. Then, $$\left( \varepsilon ^*\lambda _i-1 \right) ^2\leqslant \left( \varepsilon ^*\lambda _N-1 \right) ^2\leqslant \left( \gamma ^* \right) ^2.$$ 
	Also, due to $1-\gamma ^*\geqslant 0$, we have 
	\begin{gather*}
		\left( \gamma ^* \right) ^2-\left( 1-\varepsilon ^*\lambda _i \right) ^2+\alpha ^*=\\ \left( \gamma ^* \right) ^2-\left( 1-\varepsilon ^*\lambda _i \right) ^2+1-\gamma ^*\geqslant 0.
	\end{gather*}
%	The validity of 
	Hence inequality \eqref{eq: solve sub-5} is verified.
	
	In the second case, consider $\lambda_i$ satisfying $\varepsilon ^*\lambda _i-1<0$. Then, $$\varepsilon ^*\lambda _2-1\leqslant \varepsilon ^*\lambda _i-1<0.$$ It equals to $1-\varepsilon ^*\lambda _2\geqslant 1-\varepsilon ^*\lambda _i>0$. Then, $$\left( 1-\varepsilon ^*\lambda _2 \right) ^2\geqslant \left( 1-\varepsilon ^*\lambda _i \right) ^2.$$ Due to $\varepsilon ^*\lambda _2=2\left( 1-\gamma ^* \right) $, we have $$1-\varepsilon ^*\lambda _2=1-2\left( 1-\gamma ^* \right) =2\gamma ^*-1>0.$$ That is, $\gamma ^*>\frac{1}{2}$. 
%	Then, $$3\gamma ^*-1>\frac{3}{2}-1=\frac{1}{2}>0.$$
	Therefore we have 
	\begin{gather*}
		\left( \gamma ^* \right) ^2-\left( 1-\varepsilon ^*\lambda _i \right) ^2\geqslant \left( \gamma ^* \right) ^2-\left( 1-\varepsilon ^*\lambda _2 \right) ^2\\ = \left( \gamma ^*+1-\varepsilon ^*\lambda _2 \right) \left( \gamma ^*-1+\varepsilon ^*\lambda _2 \right) 
		\\
		=\left( 3\gamma ^*-1 \right) \left( 1-\gamma ^* \right) \geqslant 0.
	\end{gather*}
	It follows from $1-\gamma ^*\geqslant 0$ that
	\begin{gather*}
		\left( \gamma ^* \right) ^2-\left( 1-\varepsilon ^*\lambda _i \right) ^2+\alpha ^*\\ =\left( \gamma ^* \right) ^2-\left( 1-\varepsilon ^*\lambda _i \right) ^2+1-\gamma ^*\geqslant 0.
	\end{gather*}
	Inequality \eqref{eq: solve sub-5} is verified.
	This completes the proof of Theorem \ref{th1}. 
\end{pf}

In the following, we derive the convergence of the whole discrete dynamic system in \eqref{eq: DIG system}, which indicates the convergence of algorithm DIGing in \eqref{eq: DIGing_with_epsilon}. We also show the equilibrium point of the system in \eqref{eq: DIG system}, which implies that algorithm DIGing in \eqref{eq: DIGing_with_epsilon} converges to the optimum of problem \eqref{eq: problem average consensus}.
%Therefore, we obtain the convergence of the whole system \eqref{system-separate1}-\eqref{system-separate3} as follows.

\begin{thm}
	\label{thm: DIG with epsilon}
	Suppose Assumption \ref{ass graph} holds. Consider using algorithm DIGing in \eqref{eq: DIGing_with_epsilon} to solve problem \eqref{eq: problem average consensus}. %Let
	Denote the eigengap of the Laplacian matrix as $\kappa \triangleq \frac{\lambda _2}{\lambda _N}$, then $\kappa \in \left( 0,1 \right] $.
	Then, the worst-case optimal convergence rate of algorithm DIGing
	%	of system \eqref{eq: DIG system} 
	is $\gamma ^*=\frac{4-\kappa ^2+\kappa \sqrt{\kappa ^2+8\kappa}}{2\left( \kappa ^2+2\kappa +2 \right)}$, and the corresponding parameters are $\alpha ^*=1-\gamma ^*,\,\, \varepsilon ^*=\frac{2}{\lambda _2}\left( 1-\gamma ^* \right)$.
%	Further, we show that w
	With using the optimal parameters $\alpha^*$ and $\varepsilon^*$, the algorithm DIGing in \eqref{eq: DIGing_with_epsilon} converges to the optimum $x^*=\frac{1}{N}\sum_{i=1}^N{r_i}$ of problem \eqref{eq: problem average consensus}. 
%	Then, the discrete dynamic system of DIGing formulated in \eqref{eq: DIG system} converges to the optimum $x^*=\frac{1}{N}\sum_{i=1}^N{r_i}$. 
%	The 
\end{thm}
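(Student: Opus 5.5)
The plan is to reduce Theorem \ref{thm: DIG with epsilon} to Theorem \ref{th1} plus an analysis of the remaining mode $\tilde{x}_1$ in \eqref{system-separate1}. The coordinate change $\tilde{\boldsymbol{x}}=U^T\otimes I_d\,\boldsymbol{x}$ is orthogonal, so $\|\boldsymbol{x}(k)-\mathbf{1}\otimes x^*\|=\|\tilde{\boldsymbol{x}}(k)-U^T\mathbf{1}\otimes x^*\|$. Here $U^T\mathbf{1}=\sqrt{N}e_1$ by Lemma \ref{lemma 1}, since $u_1=\mathbf{1}/\sqrt{N}$ and the other $u_i$ are orthogonal to $\mathbf{1}$. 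It therefore suffices to show two things:
\begin{itemize}
\item $\tilde{\boldsymbol{x}}_{2:N}(k)\to 0$ at rate $\gamma^*$;
\item $\tilde{x}_1(k)\to\sqrt{N}x^*$ at a rate no slower than $\gamma^*$.
\end{itemize}

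\textbf{First mode and the tracking invariant.} From the $y$-update in \eqref{eq: DIGing_with_epsilon}, use $\mathbf{1}^T\mathcal{L}=0$ and multiply by $(u_1^T\otimes I_d)$. This gives $\tilde{y}_1(k+1)-\tilde{x}_1(k+1)=\tilde{y}_1(k)-\tilde{x}_1(k)$, so the quantity is invariant. At $k=0$ the initialization $y_i(0)=x_i(0)-r_i$ gives $\tilde{y}_1(0)-\tilde{x}_1(0)=-\frac{1}{\sqrt{N}}\sum_i r_i=-\sqrt{N}x^*$, hence $\tilde{y}_1(k)=\tilde{x}_1(k)-\sqrt{N}x^*$ for all $k$. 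Substituting into \eqref{system-separate1} gives
\[
\tilde{x}_1(k+1)-\sqrt{N}x^*=(1-\alpha)\bigl(\tilde{x}_1(k)-\sqrt{N}x^*\bigr).
\]
With $\alpha^*=1-\gamma^*$ this mode contracts exactly at rate $\gamma^*$, consistent with the requirement $|1-\alpha|\le\gamma$ obtained from $\varPhi_1$.

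\textbf{Remaining modes.} Since $\tilde{y}_1$ enters the subsystem \eqref{system-separate2}-\eqref{system-separate3}, I would note that the closed-loop dynamics decouple into $d$ copies of independent $2\times2$ blocks for $i=2,\dots,N$. Each block has state $(\tilde{x}_i,\tilde{y}_i)$ and matrix $\begin{bmatrix}1-\varepsilon\lambda_i & -\alpha\\ -\varepsilon\lambda_i(1-\varepsilon\lambda_i) & 1-\varepsilon\lambda_i-\alpha\end{bmatrix}$. Its characteristic polynomial is $(z-1+\varepsilon\lambda_i)^2+\alpha(z-1)$, which is exactly the denominator of $\varPhi_i$. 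By Theorem \ref{th1}, with $(\alpha^*,\varepsilon^*)$ all these roots lie in $|z|\le\gamma^*$ for every $\lambda_i\in[\lambda_2,\lambda_N]$. Hence $\rho^{-k}\tilde{x}_i(k)\to0$ for every $\rho\in(\gamma^*,1]$, and Definition \ref{def: gamma} is met with $\gamma=\gamma^*$ for every graph in the class.

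\textbf{Optimality.} Any admissible $(\alpha,\varepsilon)$ must render every $\varPhi_i$ stable for all graphs in the class. That is precisely \eqref{eq: solve the eq}, whose minimum is $\gamma^*$ by Theorem \ref{th1}, and the $\tilde{x}_1$ mode only adds the constraint $|1-\alpha|\le\gamma$, which is already part of \eqref{eq: solve sub-1}-\eqref{eq: solve sub-2}. So the whole-system worst-case optimum coincides with that of Theorem \ref{th1}. Transforming back, $\boldsymbol{x}(k)\to\mathbf{1}\otimes x^*$, which is the claimed convergence to the optimum.

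\textbf{Main obstacle.} The delicate point is the boundary case where roots sit exactly on $|z|=\gamma^*$ and are repeated. For instance, at $\lambda_i=\lambda_2$ the polynomial may have a double root of modulus $\gamma^*$, which produces Jordan-type terms $k(\gamma^*)^k$. I would handle this by noting that Definition \ref{def: gamma} only requires decay for $\rho>\gamma^*$ strictly, and $k(\gamma^*/\rho)^k\to0$. Hence spectral radius $\le\gamma^*$ suffices and no diagonalizability argument is needed.
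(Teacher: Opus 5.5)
Your proposal is correct and follows the same route as the paper: both reduce the modes $i\ge 2$ to Theorem~\ref{th1} and treat the average mode separately. Your conserved quantity $\tilde y_1-\tilde x_1\equiv-\sqrt{N}x^*$ is a tidier form of the paper's telescoped sum $\tilde x_1(k+1)=\tilde x_1(0)-\alpha\sum_{n=0}^{k}(1-\alpha)^n\tilde y_1(0)$, and it shows the rate $|1-\alpha^*|=\gamma^*$ of that mode explicitly.

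One slip: the $(2,1)$ entry of your $2\times 2$ block should be $-\varepsilon\lambda_i$, from $\tilde y_i(k+1)=(1-\varepsilon\lambda_i)\tilde y_i(k)+\tilde x_i(k+1)-\tilde x_i(k)$. Your entry $-\varepsilon\lambda_i(1-\varepsilon\lambda_i)$ would give $(z-1+\varepsilon\lambda_i)^2+\alpha(z-1+\varepsilon^2\lambda_i^2)$, not the denominator of $\varPhi_i$. With the correct entry the discriminant is $\alpha(\alpha+4\varepsilon\lambda_i)>0$, so the roots are always real and distinct, and your Jordan-block caveat is harmless but never needed.
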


\begin{pf}
	In the first part of the proof, we prove the convergence of the whole system in \eqref{eq: DIG system}.
	
	From Theorem \ref{th1}, we know that as $k \rightarrow \infty$, $\tilde{x}_i\left( k \right) \rightarrow \mathbf{0}_d$, $i=2,\cdots ,N$, and $\tilde{y}_j\left( k \right) \rightarrow \mathbf{0}_d$, $j=1,\cdots ,N$, where $\mathbf{0}_d=\left[ 0,\cdots ,0 \right] _{d}^{T}$. 
	
	Consider the subsystem in \eqref{system-separate1}.
	Due to $\tilde{y}_1 \left( k \right) \rightarrow \mathbf{0}_d$, as $k \rightarrow \infty$, we have $\tilde{x}_1\left( k \right) \rightarrow \boldsymbol{c}$, as $k \rightarrow \infty$, where $\boldsymbol{c}$ is a certain constant vector.
	Therefore, the convergence rate of the subsystem \eqref{system-separate1} depends on the convergence rate of $\tilde{y}_1 \rightarrow \mathbf{0}_d$. It thus depends on the convergence rate of the subsystem in \eqref{system-separate2}-\eqref{system-separate3}.
	Thus, we obtain that the optimal convergence rate and the optimal parameter design derived in Theorem \ref{th1} hold for the whole system in \eqref{system-separate1}-\eqref{system-separate3}. Further, they also hold for the system in \eqref{eq: DIG system}.
	
%	Next, 
%	In the second part of the proof, we obtain the equilibrium point of the system in \eqref{eq: DIG system}, and thus show that algorithm DIGing in \eqref{eq: DIGing_with_epsilon} converges to the optimum of problem \eqref{eq: problem average consensus}.
	
	Next we derive the equilibrium point of the system in \eqref{system-separate1}-\eqref{system-separate3}. 
	It follows from \eqref{system-separate1} that  
	\begin{equation*}
		\begin{split}
			\tilde{x}_1\left( k+1 \right) &=\tilde{x}_1\left( k \right) -\alpha \tilde{y}_1\left( k \right) 
			\\
			&=\tilde{x}_1\left( k-1 \right) -\alpha \tilde{y}_1\left( k-1 \right) -\alpha \tilde{y}_1\left( k \right) 
			\\
			&=\cdots=\tilde{x}_1\left( 0 \right) -\alpha \sum_{n=0}^k{\tilde{y}_1\left( n \right)}.
		\end{split}
	\end{equation*}
	From \eqref{eq: DIG system unitary}, we have 
	\begin{equation*}
		\begin{split}
			\tilde{y}_1\left( k+1 \right) &=\left( 1-\alpha  \right) \tilde{y}_1\left( k \right) 
			=\left( 1-\alpha  \right) ^2\tilde{y}_1\left( k-1 \right) 
			\\
			&=\cdots =\left( 1-\alpha  \right) ^{k+1}\tilde{y}_1\left( 0 \right) .
		\end{split}
	\end{equation*}
	Then, we have
	\begin{equation*}
		\begin{split}
			\sum_{n=0}^k{\tilde{y}_1\left( n \right)}&=\sum_{n=0}^k{\left( 1-\alpha  \right) ^n}\tilde{y}_1\left( 0 \right) 
			\\
%			=\frac{1-\left( 1-\alpha L \right) ^{k+1}}{1-\left( 1-\alpha L \right)}\tilde{y}_1\left( 0 \right) 
%			\\
			&=\frac{1-\left( 1-\alpha  \right) ^{k+1}}{\alpha }\tilde{y}_1\left( 0 \right) .
		\end{split}
	\end{equation*}
%	\textcolor{blue}{If }$\alpha \in \left( 0,\frac{2}{L} \right) $, \textcolor{blue}{we have} $-1<1-\alpha L<1$. 
	Since $\alpha ^*=1-\gamma ^*$ satisfies $-1<1-\alpha ^*<1$, it is obvious that $\left( 1-\alpha^*  \right) ^k\rightarrow 0$, as $k \rightarrow \infty$.
	Then, $\sum_{n=0}^k{\tilde{y}_1\left( n \right)}\rightarrow \frac{1}{\alpha^* }\tilde{y}_1\left( 0 \right) $, as $k \rightarrow \infty$.
	
	It follows from the initialization of \eqref{eq: DIGing_with_epsilon} that $$\tilde{y}_1\left( 0 \right) =\frac{1}{\sqrt{N}}\sum_{i=1}^N{\left( x_i\left( 0 \right) -r_i \right)}=\tilde{x}_1\left( 0 \right) -\frac{1}{\sqrt{N}}\sum_{i=1}^N{r_i}.$$
	Hence $\alpha^* \sum_{n=0}^k{\tilde{y}_1\left( n \right)}\rightarrow \tilde{x}_1\left( 0 \right) -\frac{1}{\sqrt{N}}\sum_{i=1}^N{r_i}$, as $k\rightarrow \infty$.
	Then, as $k\rightarrow \infty$, $$\tilde{x}_1\left( k+1 \right) \rightarrow \frac{1}{\sqrt{N}}\sum_{i=1}^N{r_i}.$$
	
	Therefore, the equilibrium point of the subsystem in \eqref{system-separate1} is ${\tilde{x}_1}^*=\frac{1}{\sqrt{N}}\sum_{i=1}^N{r_i}$. The equilibrium point of subsystem \eqref{system-separate2}-\eqref{system-separate3} is $\tilde{x}_j^*=\mathbf{0}_d$, $j=2,\cdots ,N$, $\tilde{y}_{l}^{*}=\mathbf{0}_d$, $l=2,\cdots,N$.
	
	Further, due to $\boldsymbol{x} =U\otimes I_d  \tilde{\boldsymbol{x}} $, $\boldsymbol{y}=U\otimes I_d\tilde{\boldsymbol{y}}$, and $U=\left[ u_1,u_2,\cdots ,u_N \right] $, $u_1=\frac{1}{\sqrt{N}}\left[ 1,\cdots ,1 \right] ^T$, we have $x_{i}^{*}=\frac{1}{N}\sum_{i=1}^N{r_i}$, and $y_{i}^{*}=\mathbf{0}_d$, for $i=1,\cdots,N$. It is the equilibrium point of the system in \eqref{eq: DIG system}. It means that the algorithm DIGing in \eqref{eq: DIGing_with_epsilon} converges to the optimum $x^{*}=\frac{1}{N}\sum_{i=1}^N{r_i}$. %, for $i=1,\cdots,N$.
	
\end{pf}

\begin{Rem}
	Note that $\gamma \left( \kappa \right)=\frac{4-\kappa ^2+\kappa \sqrt{\kappa ^2+8\kappa}}{2\left( \kappa ^2+2\kappa +2 \right)}$ is monotone decreasing in $\kappa \in \left( 0,1 \right] $, where $\kappa \triangleq \frac{\lambda _2}{\lambda _N}$ is algebraic connectivity. Thus, when using the graph satisfying $\kappa =1$, \textcolor{black}{i.e. fully-connected graph,} the algorithm DIGing achieves the fastest convergence rate $\gamma _{full}^{*}=0.6$, \textcolor{black}{which is much slower than the centralized optimization. Thus, it indicates that in order to achieve better convergence performance, we have to change the iteration structure of DIGing.}
%		shows that even using the optimal designed parameters, algorithm DIGing still remains a slow convergence rate.}
\end{Rem}

It is easy to obtain that by setting $\varepsilon=1$, the optimal convergence rate of original DIGing algorithm in \eqref{eq: original DIG entry} is $$\gamma_{org} ^*=\max \left\{ 1-\frac{\lambda _2}{2},\frac{1}{2}\left( \lambda _N-1+\sqrt{2-\left( \lambda _N-1 \right) ^2} \right) \right\} ,$$ with the corresponding parameter $\alpha ^*=1-\gamma_{org} ^*$. Obviously, $\gamma_{org} ^*$ is larger than the rate $\gamma^*$ in Theorem \ref{th1}.

\section{Conclusion}
In this paper, we have presented a new approach for the parameter design of DIGing by regarding the algorithm iteration as a discrete dynamic system. We have derived the explicit formulae of the optimal worst-case convergence rate and the corresponding parameters. As the unweighted sum of squares is the simplest objective function, the results in this paper can be viewed as the first step towards optimal parameter design of DIGing for general convex objective functions. 
In future, we will further investigate how to extend the method of graph frequency decomposition and closed-loop stability analysis to analyze the convergence of general distributed optimization algorithms.

\bibliography{average_consensus_algorithms} 
\end{document}